

\documentclass[final,3p,times]{elsarticle}




\usepackage{amsmath,amsthm,amscd,amsfonts,amssymb,epic,eepic,bbm,graphicx,ytableau}
\usepackage{tikz}
\usepackage{tkz-euclide}
\usepackage{rotating}
\usetikzlibrary{angles,quotes}


\newtheorem{thm}{Theorem}[section]
\newtheorem{cor}[thm]{Corollary}

\newtheorem{lem}[thm]{Lemma}
\newtheorem{defn}[thm]{Definition}

\newtheorem{prop}[thm]{Proposition}

\newtheorem{rem}[thm]{Remark}
\newtheorem{example}[thm]{Example}
\theoremstyle{remark}
\newcommand{\la}{\lambda}
\def\multiset#1#2{\ensuremath{\left(\kern-.5em\left(\genfrac{}{}{0pt}{}{#1}{#2}\right)\kern-.5em\right)}}

\journal{}


\begin{document}

\begin{frontmatter}



\title{The $(s,s+d,\dots,s+pd)$-core partitions and the rational Motzkin paths}


\author[1]{Hyunsoo Cho}
\ead{coconut@yonsei.ac.kr}
\author[2]{JiSun Huh\corref{cor1}}
\ead{hyunyjia@g.skku.edu}
\author[1]{Jaebum Sohn}
\ead{jsohn@yonsei.ac.kr}
\cortext[cor1]{Corresponding author.}

\address[1]{Department of Mathematics, Yonsei University, Seoul 03722, Republic of Korea}
\address[2]{Applied Algebra and Optimization Research Center, Sungkyunkwan University, Suwon 16420, Republic of Korea}

\begin{abstract}
In this paper, we propose an $(s+d,d)$-abacus for $(s,s+d,\dots,s+pd)$-core partitions and establish a bijection between the $(s,s+d,\dots,s+pd)$-core partitions and the rational Motzkin paths of type $(s+d,-d)$. This result not only gives a lattice path interpretation of the $(s,s+d,\dots,s+pd)$-core partitions but also counts them with a closed formula. Also we enumerate $(s,s+1,\dots,s+p)$-core partitions with $k$ corners and self-conjugate $(s,s+1,\dots,s+p)$-core partitions.   
\end{abstract}

\begin{keyword}
simultaneous core partitions \sep self-conjugate \sep corners  \sep rational Motzkin paths \sep generalized Dyck paths



\MSC[2008] 05A17 \sep 05A19

\end{keyword}

\end{frontmatter}



\section{Introduction}

A partition $\la=(\la_1,\la_2,\dots,\la_\ell)$ of a positive integer $n$ is a finite non-increasing sequence of positive integer parts $\la_i$ such that $\la_1+\la_2+\cdots+\la_\ell=n$. The Young diagram of $\la$ is a finite collection of $n$ boxes arranged in left-justified rows, with the $i$th row having $\la_i$ boxes.
For the Young diagram of $\la$, the partition $\la'=(\la'_1,\la'_2,\dots)$ is called the conjugate of $\la$, where $\la'_j$ denotes the number of boxes in the $j$th column.
For each box of the Young diagram in coordinates $(i,j)$ (that is, the box in the $i$th row and $j$th column), the hook length $h(i,j)$ is the number of boxes weakly below and strictly to the right of the box. For a partition $\la$, the {\it beta-set} of $\la$, denoted $\beta(\la)$, is defined as the set of first column hook lengths of $\la$. For example, the conjugate of $\la=(5,4,2,1)$ is $\la'=(4,3,2,2,1)$ and the {\it beta-set} of $\la$ is $\beta(\la)=\{8,6,3,1\}$. See Figure \ref{fig:abacus} for the Young diagram of $\la$ and the hook lengths of each box.

For a positive integer $t$, a partition $\la$ is a $t$-core (partition) if it has no box of hook length $t$. In the previous example, $\la$ is a $t$-core for $t=5,7,$ or $t\geq 9$. For distinct positive integers $t_1,t_2,\dots,t_p$, we say that a partition $\la$ is a $(t_1,t_2,\dots,t_p)$-core if it is simultaneously a $t_1$-core, a $t_2$-core, \dots, and a $t_p$-core. The study of core partitions arose from the representation theory of the symmetric group $S_n$ (see \cite{JK} for details). Researches on simultaneous core partitions are motivated by the following result of Anderson \cite{Anderson}.

\begin{thm}\cite[Theorem 1]{Anderson}
For relatively prime positive integers $s$ and $t$, the number of $(s,t)$-core partitions is 
\[
\frac{1}{s+t}\binom{s+t}{s}\,.
\]
In particular, the number of $(s,s+1)$-core partitions is the $s$th Catalan number $C_s=\frac{1}{s+1}\binom{2s}{s}=\frac{1}{2s+1}\binom{2s+1}{s}$.
\end{thm}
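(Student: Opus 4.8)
The plan is to pass from partitions to their beta-sets, recast the core conditions as an abacus closure condition, transport everything to lattice paths, and finish with the cycle lemma. First I would record the standard encoding: sending $\la$ to its beta-set $\beta(\la)$ identifies partitions with finite sets of positive integers, equivalently with a bi-infinite $0/1$ bead sequence normalized so that all sufficiently negative positions are beads and all sufficiently positive positions are gaps. The basic observation, which I would verify directly from the definition of hook lengths, is that removing a box of hook length $t$ corresponds to sliding a bead from position $x$ to the empty position $x-t$. Consequently $\la$ is a $t$-core if and only if $\beta(\la)$ contains $x-t$ whenever it contains $x>t$; this is exactly the condition that on the $t$-runner abacus every bead is pushed as high as possible. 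Applying this to both moduli, an $(s,t)$-core is precisely a partition whose beta-set is simultaneously closed under subtracting $s$ and under subtracting $t$.

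Next I would exploit $\gcd(s,t)=1$ to turn this double-closure condition into a lattice-path statement. The idea is to build the boundary path of the Young diagram of $\la$: reading the border of the diagram from bottom-left to top-right gives a sequence of unit North and East steps, and a position is a bead precisely when the corresponding step is North. The $t$-closure and $s$-closure of $\beta(\la)$ translate, after weighting the two step directions by $s$ and $t$, into the requirement that the associated path from $(0,0)$ to $(t,s)$ stays weakly below the diagonal joining these corners, i.e.\ that $\la$ maps to a rational $(s,t)$-Dyck path. I would then check that this map is a bijection: injectivity is immediate because the beta-set determines $\la$, and surjectivity follows by reversing the construction, reading a Dyck path back into a double-closed set of positive integers and hence into an $(s,t)$-core. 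The crux is to show that the diagonal constraint is equivalent to closure under both subtractions; here the coprimality of $s$ and $t$ ensures that the diagonal passes through no lattice point other than its endpoints, so the region strictly below it is unambiguous and the double-closure condition carves out exactly the paths confined to it.

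Finally I would count the rational $(s,t)$-Dyck paths. Among all $\binom{s+t}{s}$ lattice paths from $(0,0)$ to $(t,s)$, I would apply the cycle lemma of Dvoretzky and Motzkin: since $\gcd(s,t)=1$, the cyclic group of order $s+t$ acts freely on the cyclic rearrangements of each step word, and exactly one representative in every orbit is a Dyck path. This yields
\[
\frac{1}{s+t}\binom{s+t}{s}
\]
paths, which by the bijection equals the number of $(s,t)$-cores; the specialization $t=s+1$ gives $\frac{1}{2s+1}\binom{2s+1}{s}=C_s$. I expect the main obstacle to be the bookkeeping in the second paragraph: pinning down the exact weighting and base point so that ``weakly below the diagonal'' matches ``closed under subtracting both $s$ and $t$,'' and handling the normalization of the beta-set so that the correspondence with paths is a genuine bijection rather than merely injective.
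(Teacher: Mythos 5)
This statement is quoted from Anderson and the paper gives no proof of it, so there is nothing internal to compare against; I will assess your argument on its own terms. Your overall architecture --- beta-sets, the abacus closure condition $x\in\beta(\la)\Rightarrow x-t\in\beta(\la)$, a bijection to lattice paths in the $t\times s$ rectangle, and the Dvoretzky--Motzkin cycle lemma --- is exactly the right strategy, and is essentially Anderson's original proof; it is also the template that this paper generalizes (the $(s+d,d)$-abacus of Section~\ref{sec:2} is an extension of Anderson's two-way abacus, and your cycle-lemma step is the same argument as Lemma~\ref{lem:cyclic}). The first and third paragraphs of your plan are sound.

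The genuine gap is the construction of the bijection in your second paragraph. You propose to read the path off the boundary word of the Young diagram (North step at position $x$ iff $x$ is a bead). That word is bi-infinite, and the number of beads among the nonnegative positions is $|\beta(\la)|$, the number of parts of $\la$, which varies from one $(s,t)$-core to another; so no fixed window of the boundary word can produce, uniformly, a path with exactly $s$ North and $t$ East steps ending at $(t,s)$. The path to $(t,s)$ does not come from the boundary word but from the two-runner structure: arrange the nonnegative integers in an array whose position $(i,j)$ is labeled by a $\mathbb{Z}$-combination of $s$ and $t$ (equivalently, for each residue class record the smallest non-bead, as the function $f$ does in Lemma~\ref{lem:af}), observe that the simultaneous closure under $-s$ and $-t$ forces the set of beads to be an order ideal in this array, and take the path to be the staircase boundary of that ideal. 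Coprimality enters exactly where you say (the diagonal meets no interior lattice point), but without the array the phrase ``after weighting the two step directions by $s$ and $t$'' does not define a map, and injectivity/surjectivity cannot be checked. You correctly identified this as the main obstacle; as written, it is not a bookkeeping issue but the missing heart of the bijection, and the rest of the proof stands only once it is supplied.
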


Since the work of Anderson, results on $(s,t)$-cores were published by many researchers (see \cite{AHJ,CHW,Fayers,Fayers2,FV,FMS}). Also, some researchers concerned with simultaneous core partitions whose cores line up in arithmetic progression (see \cite{AL,BNY,CHS,Wang,YYZ,YZZ}).
Yang-Zhong-Zhou \cite{YZZ} showed that the number of $(s,s+1,s+2)$-core partitions is the $s$th Motzkin number $M_s=\sum_{k=0}^{\lfloor s/2 \rfloor} \binom{s}{2k}C_k$, where $C_k$ is the $k$th Catalan number. 
Amdeberhan-Leven \cite{AL} and Wang \cite{Wang} extended this result.

\begin{thm}\cite[Theorem 4.2]{AL}\label{thm:gendyck}
An \emph{$(s,p)$-generalized Dyck path} is a path from $(0,0)$ to $(s,s)$ stays above the line $y=x$ and consists of vertical steps $(0,p)$, horizontal steps $(p,0)$, and diagonal steps $(i,i)$ for $1\leq i\leq p-1$.

For integers $s$ and $p$, the number $C_s^{(p)}$ of $(s,s+1,\dots,s+p)$-core partitions is equal to the number of $(s,p)$-generalized Dyck paths, which satisfies the following recurrence relation:
\[
C_s^{(p)}=\sum_{k=1}^{s}C_{k-p}^{(p)}C_{s-k}^{(p)}\,,
\]
where $C_s^{(p)}=1$ for $s<0$.
\end{thm}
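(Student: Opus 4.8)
My plan is to treat the two claims in turn: first an explicit bijection showing that $(s,s+1,\dots,s+p)$-core partitions are equinumerous with $(s,p)$-generalized Dyck paths, and then a first-return decomposition of the paths yielding the recurrence. I encode a partition by its bead sequence $f\colon\mathbb{Z}\to\{0,1\}$ (the Maya diagram refining $\beta(\la)$), normalised so that $f$ is eventually $1$ to the left and eventually $0$ to the right. The elementary fact I record first is that each box of $\la$ corresponds to a pair of positions $a<b$ with $f(a)=0$ and $f(b)=1$, of hook length $b-a$; hence $\la$ is a $t$-core exactly when $f(b)=1$ forces $f(b-t)=1$. Intersecting this over $t\in\{s,s+1,\dots,s+p\}$ gives the criterion I will use: $\la$ is an $(s,s+1,\dots,s+p)$-core if and only if a bead at any position $b$ forces beads at the $p+1$ consecutive positions $b-s-p,\dots,b-s$, equivalently a gap at $g$ forces gaps at $g+s,\dots,g+s+p$.

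Next I would convert this local closure law into a lattice path. Reading $f$ across the fundamental window fixed by the $s$-core normalisation and recording up-steps for beads and right-steps for gaps, the rigidity forced by the $(p+1)$-block closure makes the record assemble into vertical steps $(0,p)$, horizontal steps $(p,0)$, and diagonal steps $(i,i)$ with $1\le i\le p-1$, the diagonals arising from the partial blocks permitted by the extreme conditions $t=s$ and $t=s+p$. The $s$-core normalisation forces the path to end at $(s,s)$, while upward-closedness of the gap set under the shifts $s,\dots,s+p$ is precisely what keeps the path weakly above $y=x$. To see this is a genuine bijection I would exhibit the inverse, rebuilding the bead sequence block by block from a given generalized Dyck path and checking the closure condition according to step type.

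Granting the bijection, the recurrence is a clean decomposition of $(s,p)$-generalized Dyck paths. Put $h=y-x\ge 0$; since vertical, horizontal, and diagonal steps change $h$ by $+p$, $-p$, and $0$, the height $h$ is always a nonnegative multiple of $p$. I split on the first step. If the path opens with a diagonal step $(k,k)$ then $1\le k\le p-1$ and the remainder is an arbitrary $(s-k,p)$-generalized Dyck path, contributing $C_{s-k}^{(p)}$. Otherwise the path opens with a vertical step and rises strictly above the diagonal; letting $(k,k)$ be the first return to the diagonal (necessarily $k\ge p$, as interior heights are positive multiples of $p$), the step entering $(k,k)$ must be horizontal, so deleting the opening $(0,p)$ and the closing $(p,0)$ leaves, after a translation by $(0,-p)$, a $(k-p,p)$-generalized Dyck path, while the portion beyond $(k,k)$ is a $(s-k,p)$-generalized Dyck path. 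Summing over $k$ from $1$ to $s$ gives $C_s^{(p)}=\sum_{k=1}^{s}C_{k-p}^{(p)}C_{s-k}^{(p)}$; here the convention $C_{k-p}^{(p)}=1$ for $k<p$ records exactly the single diagonal-step openings, and $C_0^{(p)}=1$ records the minimal arch $(0,0)\to(0,p)\to(p,p)$.

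The recurrence is routine; the genuine difficulty lies in the bijection. The combined core criterion is by construction the exact conjunction of the $p+1$ conditions, since the positions $b-t$ for $t\in\{s,\dots,s+p\}$ form the consecutive block $b-s-p,\dots,b-s$, so no hidden implication among the conditions must be proved. What needs care is the reading rule itself: choosing the fundamental window so that the path terminates precisely at $(s,s)$, proving that the $(p+1)$-block closure forces the bead and gap runs to group into steps of exactly the allowed shapes (including diagonals of every size $1\le i\le p-1$), and showing step by step that the weakly-above-diagonal constraint corresponds to upward-closedness of the gap set. I would pin the rule down by first matching the $p=1$ specialization (Catalan, Anderson's bijection to Dyck paths of semilength $s$) and the $p=2$ specialization (Motzkin), and then verify the general block structure; this well-definedness-and-surjectivity check is where the real work sits.
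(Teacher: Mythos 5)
This theorem is only quoted in the paper (from Amdeberhan--Leven), so there is no in-paper proof to match yours against; I can only judge your argument on its own terms and against the machinery the paper does build. Your second half is sound: the first-return decomposition of $(s,p)$-generalized Dyck paths is correct, the observation that the height $y-x$ is always a nonnegative multiple of $p$ is exactly what guarantees both that the step entering the first return is horizontal and that the interior of an arch stays at height at least $p$, and the convention $C^{(p)}_{k-p}=1$ for $k<p$ correctly absorbs the diagonal openings into the single sum over $1\le k\le s$.

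The genuine gap is in the first half: the bijection is announced but never constructed. You correctly derive the bead/gap criterion (a bead at $b$ forces beads at the consecutive block $b-s-p,\dots,b-s$; this is the same observation the paper records just before Lemma \ref{lem:af}), but the passage from that criterion to a lattice path is carried entirely by the phrase that the bead/gap record ``assembles'' into steps $(0,p)$, $(p,0)$, $(i,i)$ --- and you yourself defer the well-definedness and surjectivity check as ``where the real work sits.'' Worse, the reading rule as literally stated (scan a window of the Maya diagram, up-step per bead, right-step per gap) cannot work even in the base case $p=1$: a path ending at $(s,s)$ in unit steps needs a window containing exactly $s$ beads and $s$ gaps, but the beta-set of an $(s,s+1)$-core reaches up to $s^2-s-1$, so no window of length $2s$ captures it; Anderson's bijection is not a linear scan but a two-dimensional arrangement of residues (equivalently, it reads the abacus column profile). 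So it is unclear which window you read, why it has the right step count, or why the block-closure forces the record to factor into the allowed macro-steps. The paper's own machinery gives a correct two-stage substitute: Proposition \ref{prop:coremotz} with $d=1$ sends a core to a Motzkin path of length $s$ avoiding $UF^iU$ for $i\le p-3$ via the spacer profile $f$ of Lemma \ref{lem:af}, and the substitution $\phi$ of Section \ref{sec:bij} ($UF^{p-2}\mapsto(0,p)$, $D\mapsto(p,0)$, $F\mapsto(1,1)$, $UF^{i-2}D\mapsto(i,i)$) converts that to an $(s,p)$-generalized Dyck path. Either carry out such a two-stage construction or replace your reading rule with a precise one and verify it; as written the equinumerosity claim is unproved.
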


\begin{thm}\cite[Theorem 1.6]{Wang}\label{thm:Wang}
For relatively prime positive integers $s$ and $d$, the number of $(s,s+d,s+2d)$-core partitions is 
\[
\frac{1}{s+d}\sum_{k=0}^{\lfloor s/2 \rfloor}\binom{s+d}{k,k+d,s-2k}\,.
\]
\end{thm}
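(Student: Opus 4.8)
The plan is to derive Wang's count as the $p=2$ case of the bijection announced in the abstract, by specializing the $(s+d,d)$-abacus to the three moduli $s,\,s+d,\,s+2d$ and then enumerating the resulting paths with a cycle-lemma argument. First I would record each partition by its beta-set $\beta(\la)$ and recall the standard abacus characterization of a $t$-core: a partition is a $t$-core exactly when $x\in\beta(\la)$ with $x\ge t$ forces $x-t\in\beta(\la)$. The task is then to show that the three closure conditions for $t=s,\,s+d,\,s+2d$, once the beta-set is displayed on an abacus with $s+d$ runners and bead spacing $d$, amalgamate into a single local condition on the bead pattern; this is where the coprimality of $s$ and $d$ first enters, as it is what makes the relevant residues interleave without collision.

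Next I would turn each admissible bead pattern into a lattice path of length $s+d$ built from up-steps $(1,1)$, down-steps $(1,-1)$, and level-steps $(1,0)$ (the three step types to which the general step set collapses when $p=2$), that is, a rational Motzkin path of type $(s+d,-d)$, and check that the map is a bijection onto those paths obeying the dominance condition inherited from the core condition. Because the type is fixed, a path with $k$ up-steps automatically has $k+d$ down-steps and $s-2k$ level-steps, so its net displacement is $-d$ and $k$ ranges over $0,1,\dots,\lfloor s/2\rfloor$; hence the number of \emph{unconstrained} paths of type $(s+d,-d)$, graded by $k$, is
\[
\sum_{k=0}^{\lfloor s/2\rfloor}\binom{s+d}{k,\,k+d,\,s-2k}\,.
\]

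It then remains to restrict this total to the dominant paths, which I would do with the cycle lemma applied to the $s+d$ cyclic rotations of each step word, taking the dominance condition to be that the path stay weakly above the chord of slope $-d/(s+d)$ joining $(0,0)$ to $(s+d,-d)$. Writing $L_i=(s+d)h_i+d\,i$ for the scaled signed height of the lattice point $(i,h_i)$ above that chord, a short computation using $\gcd(s,d)=1$ shows that an interior coincidence $L_i=L_j$ forces $s+d\mid(j-i)$; hence within a single word the values $L_0,\dots,L_{s+d-1}$ are pairwise distinct, there is a unique minimum, and exactly one cyclic rotation is dominant. The same coprimality makes every step word aperiodic (any period would divide both $s+d$ and $d$, hence $\gcd(s,d)=1$), so each cyclic class has exactly $s+d$ members. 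Dividing the unconstrained total by $s+d$ produces the asserted closed form.

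The step I expect to be the main obstacle is the first one: proving that the three separate closure conditions coming from $s,\,s+d,\,s+2d$ really do collapse to one clean local condition on the $(s+d,d)$-abacus, and that this condition matches exactly the dominance condition the cycle lemma counts. Once this abacus-to-path dictionary is in place, the enumeration is essentially forced, with the coprimality hypothesis being precisely what turns the cycle lemma's division by $s+d$ into an exact equality.
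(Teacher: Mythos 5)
Your proposal follows essentially the same route as the paper: you encode cores on the $(s+d,d)$-abacus, read off a rational Motzkin path of type $(s+d,-d)$ (the paper's Lemma~\ref{lem:af} and Proposition~\ref{prop:coremotz} with $p=2$, where the $UF^iU$ restriction is vacuous), and then apply the cycle lemma with exactly the paper's label vector --- your $L_i=(s+d)h_i+di$ has increments $s+2d$, $d$, $-s$ on $U$, $F$, $D$, matching Lemma~\ref{lem:cyclic} --- before dividing the multinomial total by $s+d$ as in Proposition~\ref{prop:freemotz}. The argument is correct and the step you flag as the main obstacle (collapsing the three core conditions into the local abacus condition) is precisely what the paper handles in Lemma~\ref{lem:af}.
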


Recently, Baek-Nam-Yu \cite{BNY} obtained an alternative proof for Theorem \ref{thm:Wang} and found a formula for the number of $(s,s+d,s+2d,s+3d)$-core partitions.

\begin{thm}\cite[Theorem 5.7]{BNY}\label{thm:BNY}
For relatively prime positive integers $s$ and $d$, the number of $(s,s+d,s+2d,s+3d)$-core partitions is 
\[
\frac{1}{s+d}\sum_{k=0}^{\lfloor s/2 \rfloor}\left\{\binom{s+d-k}{k}+\binom{s+d-k-1}{k-1}\right\}\binom{s+d-k}{s-2k}\,.
\]
\end{thm}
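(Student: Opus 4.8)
The plan is to obtain this $p=3$ identity as a special case of the paper's correspondence between $(s,s+d,\dots,s+pd)$-core partitions and rational Motzkin paths of type $(s+d,-d)$, and then to enumerate the resulting paths directly. First I would specialize the $(s+d,d)$-abacus description to $p=3$: a partition $\la$ is an $(s,s+d,s+2d,s+3d)$-core precisely when its beta-set, displayed on the $(s+d,d)$-abacus, avoids the forbidden bead moves imposed by the four residues $s,\,s+d,\,s+2d,\,s+3d$. Translating the admissible bead configurations into path language produces lattice paths built from a fixed step set: up-steps, down-steps, and two kinds of interior (diagonal-type) steps coming from the two ``middle'' residues $s+d$ and $s+2d$. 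This identifies the $(s,s+d,s+2d,s+3d)$-cores with the rational Motzkin paths of type $(s+d,-d)$ carrying the $p=3$ step multiset, and reduces the theorem to a path-counting problem.

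Next I would count these constrained paths by the cycle lemma (Dvoretzky--Motzkin). Writing $N=s+d$ for the number of steps, the paths of type $(s+d,-d)$ are exactly the $N$-step sequences from the chosen step set whose total height change is $-d$, i.e.\ paths that stay weakly above the line $y=-\frac{d}{s+d}x$; since $\gcd(s,d)=1$ forces $\gcd(s+d,d)=1$, each cyclic class of $N$ rotations of such a free sequence contains exactly one path meeting the line only at the ends, and this is what produces the prefactor $\frac1N=\frac1{s+d}$. It then remains to enumerate the free sequences. Grouping them by the number $k$ of up-steps, the factor $\binom{s+d-k}{s-2k}$ records the placement of the level steps, while $\binom{s+d-k}{k}+\binom{s+d-k-1}{k-1}$ records the placement of the up-steps together with the two interior diagonal steps, the second summand arising from the Pascal splitting that distinguishes the size-one and size-two diagonals. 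Summing over $0\le k\le\lfloor s/2\rfloor$ gives the stated formula.

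The main obstacle I anticipate is twofold. First, pinning down the exact step set and the governing line for $p=3$ so that the forbidden-hook conditions for all four moduli are encoded simultaneously by a single ``stays above the line'' constraint; the interaction between the $(s+d,d)$-abacus spacing and the three interior residues is where the bookkeeping is most delicate, and it is here that the two interior steps must be set up to yield exactly the combination $\binom{s+d-k}{k}+\binom{s+d-k-1}{k-1}$. Second, verifying the cycle-lemma hypotheses: one must confirm that no free sequence meets the line prematurely in a way that leaves a cyclic class without a unique good representative, which is exactly the point where the coprimality hypothesis $\gcd(s,d)=1$ is consumed. Once these two points are secured, the remaining binomial bookkeeping is routine. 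As a consistency check I would verify that setting $d=1$ recovers the $p=3$ generalized-Dyck count $C_s^{(3)}$ of Theorem \ref{thm:gendyck}, and that the same computation run with the $p=2$ step set reproduces Wang's formula in Theorem \ref{thm:Wang}.
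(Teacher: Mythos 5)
The paper does not actually prove this statement: Theorem~\ref{thm:BNY} is quoted from Baek--Nam--Yu, and the paper's own machinery (Propositions~\ref{prop:coremotz} and~\ref{prop:mainprop}) yields at $p=3$ the differently-shaped formula of Theorem~\ref{thm:main}, normalized by $\frac{1}{k+d}$ and obtained by anchoring cyclic shifts at down steps and stratifying by the number $\ell$ of $UU$ pairs in the $U/D$ skeleton. Your overall plan --- pass to rational Motzkin paths of type $(s+d,-d)$, apply a cycle lemma with prefactor $\frac{1}{s+d}$, and count free cyclic sequences by the number $k$ of up steps --- is a viable and in fact more direct route to the exact BNY expression than specializing Theorem~\ref{thm:main}. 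So the skeleton is sound.

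However, there is a genuine error in the part of your argument that is supposed to produce the key factor. In the paper's model the step set is the same for every $p$: only $U$, $D$, $F$. The parameter $p$ enters solely through the forbidden patterns $UF^iU$, $i=0,\dots,p-3$; for $p=3$ this is simply ``no $UU$.'' There are no ``two kinds of interior diagonal steps coming from the residues $s+d$ and $s+2d$,'' and consequently your explanation that $\binom{s+d-k}{k}+\binom{s+d-k-1}{k-1}$ arises from ``Pascal splitting distinguishing the size-one and size-two diagonals'' has no counterpart in a correct proof --- you appear to have imported the diagonal steps $F_1,F_2$ from the $(s,p)$-generalized Dyck paths of Theorem~\ref{thm:gendyck}, which model only the $d=1$ case. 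The correct mechanism is: a free sequence of length $s+d$ avoiding cyclic $UU$ is determined by choosing $k$ cyclically non-adjacent positions for the up steps, in $\frac{s+d}{s+d-k}\binom{s+d-k}{k}=\binom{s+d-k}{k}+\binom{s+d-k-1}{k-1}$ ways, and then choosing which $s-2k$ of the remaining $s+d-k$ positions are flat, in $\binom{s+d-k}{s-2k}$ ways. You must also check two points you did not flag: that the avoidance condition used in the cycle argument is the \emph{cyclic} one (so that all $s+d$ rotations of a free sequence stay in the class), and that a rational Motzkin path of type $(s+d,-d)$ necessarily ends in a down step, so that for the good representatives ``no $UU$ substring'' and ``no cyclic $UU$'' coincide. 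With those repairs the computation does deliver the stated formula; as written, the central binomial factor is justified by an argument about objects that do not exist in this model.
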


According to previous literatures, there are two useful methods to count simultenous core partitions. An idea of counting paths was used in \cite{AL,Anderson,AHJ,CHW,FMS,HW} and counting the lattice points method was used in \cite{BNY,Fayers,FV,Johnson,Wang}. 

In this paper, we define the ``rational Motzkin path", which generalizes the idea of the Motzkin path, and give a generalization of Theorems \ref{thm:gendyck}, \ref{thm:Wang}, and \ref{thm:BNY} by using rational Motzkin paths of type $(s+d,-d)$ with a specific restriction (see Definition \ref{def:rational}). The following is the main result of this paper.    
 
\begin{thm}\label{thm:main}
Let $s$ and $d$ be relatively prime positive integers. For a given integer $p\geq2$, the number of $(s,s+d,\dots,s+pd)$-core partitions is equal to the number of rational Motzkin paths of type $(s+d,-d)$ 
without $UF^iU$ steps for $i=0,1,\dots,p-3$ if $p\geq 3$, that is 
\[
\frac{1}{s+d}\binom{s+d}{d}+\sum_{k=1}^{\lfloor s/2 \rfloor}\sum_{\ell=0}^{r}\frac{1}{k+d}\binom{k+d}{k-\ell}\binom{k-1}{\ell}\binom{s+d-\ell(p-2)-1}{2k+d-1}\,,
\]
where $r=\min(k-1,\,\lfloor (s-2k)/(p-2) \rfloor)$.
\end{thm}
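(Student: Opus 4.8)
The plan is to prove Theorem~\ref{thm:main} in two stages: a bijective stage that turns the counting of $(s,s+d,\dots,s+pd)$-core partitions into the counting of the restricted paths of Definition~\ref{def:rational}, and an enumerative stage that evaluates the latter in closed form. For the bijective stage I would encode a core $\la$ by its beta-set $\beta(\la)$ and place the beads on the $(s+d,d)$-abacus. The conditions of being an $s$-core and an $(s+d)$-core should become the statement that the bead configuration has no gaps along its runners, while the remaining conditions for $s+2d,\dots,s+pd$ should become a spacing requirement between beads lying on neighbouring runners. Scanning the abacus and recording at each position an up, flat, or down step, one reads off a rational Motzkin path of type $(s+d,-d)$, and the extra spacing requirement translates into the prohibition of the patterns $UF^iU$ for $0\le i\le p-3$.

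Granting the bijection, the enumerative stage counts rational Motzkin paths of type $(s+d,-d)$ that avoid $UF^iU$ for $0\le i\le p-3$, stratified by the number $k$ of up steps. Such a path has $k$ up steps, $k+d$ down steps and $s-2k$ flat steps, whence $0\le k\le\lfloor s/2\rfloor$. When $k=0$ there are no up steps, the avoidance condition is vacuous, and the paths are exactly the rational Dyck paths of type $(s+d,-d)$; since $\gcd(s+d,d)=\gcd(s,d)=1$, the cycle lemma applied to all $s+d$ steps gives precisely $\frac{1}{s+d}\binom{s+d}{d}$ of them, which is the leading term.

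For $k\ge1$ I would refine the stratification by the number $\ell$ of pairs of up steps that are consecutive in the path with no intervening down step; these are exactly the spots at which the avoidance condition has force, since a pair of up steps separated only by flats must, to avoid $UF^iU$ for $i\le p-3$, be separated by at least $p-2$ of them. First count the underlying up-down skeletons carrying exactly $\ell$ such adjacencies; a cycle-lemma argument yields $\frac{1}{k+d}\binom{k+d}{k-\ell}\binom{k-1}{\ell}$ for this number, where the factor $\binom{k-1}{\ell}$ records the grouping of the $k$ up steps into $k-\ell$ maximal ascending runs. Next reserve the forced block of $p-2$ flats at each of the $\ell$ tight spots and distribute the remaining $s-2k-\ell(p-2)$ flats freely among the $2k+d$ admissible slots, which contributes $\binom{s+d-\ell(p-2)-1}{2k+d-1}$; the constraints $\ell\le k-1$ and $\ell(p-2)\le s-2k$ are precisely what make $r=\min(k-1,\lfloor(s-2k)/(p-2)\rfloor)$ the correct summation bound. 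Multiplying the three factors and summing over $k$ and $\ell$ produces the asserted formula, and one checks the specialisation $d=1$ against the Motzkin-number identity $M_s=\sum_k\binom{s}{2k}C_k$ using $\sum_{\ell}\frac{1}{k+1}\binom{k+1}{k-\ell}\binom{k-1}{\ell}=C_k$.

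The main obstacle is the bijective stage, specifically verifying that the abacus map is a bijection and that the conditions for $s+2d,\dots,s+pd$ correspond \emph{exactly}---neither more weakly nor more strongly---to the forbidden patterns $UF^iU$ with $0\le i\le p-3$. A second delicate point is the refined skeleton count $\frac{1}{k+d}\binom{k+d}{k-\ell}\binom{k-1}{\ell}$: the factor $\frac{1}{k+d}$ is not individually integral, so this expression cannot be split into independent ``cycle-lemma'' and ``composition'' counts, and one must run the cycle lemma on the whole skeleton while simultaneously tracking the $UU$-adjacency statistic $\ell$. Once these two points are secured, the remaining flat-distribution and summation steps are routine.
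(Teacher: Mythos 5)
Your proposal is correct and follows essentially the same route as the paper: an $(s+d,d)$-abacus bijection sending cores to rational Motzkin paths of type $(s+d,-d)$ avoiding $UF^iU$, followed by a cycle-lemma count stratified by the number $k$ of up steps and the number $\ell$ of $UU$-adjacencies in the flat-deleted skeleton, with the forced blocks of $p-2$ flats and a stars-and-bars count giving $\binom{s+d-\ell(p-2)-1}{2k+d-1}$. The subtlety you flag about the non-integral factor $\frac{1}{k+d}$ is resolved in the paper exactly as you suggest, by applying the cycle lemma to the full path and anchoring the cyclic class at its $k+d$ down steps.
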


For example, if we put $s=3$, $d=2$, and $p=2$ in Theorem \ref{thm:main}, then we get six, the number of $(3,5,7)$-core partitions. In fact, $(3,5,7)$-core partitions are \,$\emptyset, ~(1), ~(1,1), ~(2), ~(2,1,1)$, and $(3,1)$.

As a corollary, by setting $d=1$, we obtain a closed formula for the number of $(s,s+1,\dots,s+p)$-cores. Also, we give a bijection between the set of $(s,p)$-generalized Dyck paths and that of Motzkin paths of length $s$ with a restriction. Furthermore, we count the number of $(s,s+1,\dots,s+p)$-cores with $k$ corners. At the end of this paper, we enumerate self-conjugate $(s,s+1,\dots,s+p)$-core partitions.


\section{Counting $(s,s+d,\dots,s+pd)$-core partitions}\label{sec:2}

In this section, we introduce the $(s+d,d)$-abacus diagram and the rational Motzkin paths, and by using these objects we shall count $(s,s+d,\dots,s+pd)$-core partitions for relatively prime positive integers $s$ and $d$. In the sequel, we assume that $p$ is a positive integer.


\subsection{The $(s+d,d)$-abacus diagram}  

James-Kerber \cite{JK} introduced the abacus diagram which has played important roles in the theory of core partitions (see \cite{AHJ,Fayers,Fayers2,Johnson2,NS}).
The \emph{$s$-abacus diagram} is a diagram with infinitely many rows labeled $0,1,\dots,\infty$ and $s$ columns labeled $0,1,\dots,s-1$, whose position in $(i,j)$ is labeled by $si+j$, where $i=0,1,\dots,\infty$ and $j=0,1,\dots,s-1$. The \emph{$s$-abacus} of a partition $\la$ is obtained from the $s$-abacus diagram by placing a bead on each position which the number at this position belongs to $\beta(\la)$. Positions without beads are called spacers.

\begin{example}
If $\la=(5,4,2,1)$, then $\la$ is a $5$-core partition and $\beta(\la)=\{8,6,3,1\}$. Figure \ref{fig:abacus} shows the Young diagram with the hook lengths and the $5$-abacus of $\la$.
\end{example}

\begin{figure}[ht!]
\centering
\begin{tikzpicture}[scale=.44]
\node at (-7,2) {
\begin{ytableau}
8&6&4&3&1 \\
6&4&2&1 \\
3&1 \\
1
\end{ytableau}};

\foreach \i in {0,1,2,3,4}
\node at (2*\i,0) {$\i$};
\foreach \i in {5,6,7,8,9}
\node at (2*\i-10,1) {$\i$};
\foreach \i in {10,11,12,13,14}
\node at (2*\i-20,2) {$\i$};
\foreach \i in {15,16,17,18,19}
\node at (2*\i-30,3) {$\i$};
\node at (4,4.1) {\vdots};
\draw (2,0) circle (12pt);
\draw (6,0) circle (12pt);
\draw (2,1) circle (12pt);
\draw (6,1) circle (12pt);
\end{tikzpicture}

\caption{The Young diagram of a $5$-core partition $(5,4,2,1)$ with the hook lengths and its $5$-abacus}\label{fig:abacus}
\end{figure}

It is well-known that $\la$ is an $s$-core if and only if the $s$-abacus of $\la$ has no spacer below a bead in any column. Equivalently, one can have the following.

\begin{lem}\label{lem:core}\cite[Lemma 2.7.13]{JK} For a partition $\la$, $\la$ is an $s$-core if and only if $x\in\beta(\la)$ implies $x-s\in\beta(\la)$.
\end{lem}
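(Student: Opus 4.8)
The plan is to derive the beta-set criterion directly from the abacus characterization recalled immediately above---that $\la$ is an $s$-core exactly when its $s$-abacus has no spacer lying below a bead inside a single column---so that the proof reduces to translating this geometric statement about beads into the arithmetic statement about $\beta(\la)$. I will read off what ``no spacer below a bead'' means column by column using the position labelling $si+j$, and then match the result against the displayed implication.

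First I fix the column structure: position $x$ occupies row $\lfloor x/s\rfloor$ and column $x\bmod s$, so within a single column the positions read $j, j+s, j+2s,\dots$ from top to bottom, and the cell directly above $x$---present exactly when $x\ge s$---is labelled $x-s$. A bead sits at $x$ if and only if $x\in\beta(\la)$. Hence ``no spacer below a bead in a given column'' says precisely that the beads of that column form an unbroken segment starting at the top: whenever $x\in\beta(\la)$ has a cell above it, that cell is also occupied. Ranging over all columns, the abacus condition becomes ``for every $x\in\beta(\la)$ with $x\ge s$ one has $x-s\in\beta(\la)$,'' and an immediate downward induction shows this is the same as requiring every bead to be backed by a full column of beads above it.

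It remains only to reconcile this with the stated form $x\in\beta(\la)\Rightarrow x-s\in\beta(\la)$, and this is the single point deserving care. When $x\in\beta(\la)$ lies in the top row, i.e.\ $0\le x<s$, the number $x-s$ is negative and labels no position, so the top row imposes no condition; one therefore reads the implication with the convention that it is vacuous for such $x$ (equivalently, one passes to the two-sided bead model in which $\beta(\la)$ is completed downward by all sufficiently negative integers, where the implication holds automatically for small $x$). With this understanding the two conditions coincide. The conceptual reason the abacus characterization holds in the first place---which I would invoke if a self-contained argument were wanted---is that the hook lengths appearing in row $i$ of $\la$ are exactly the numbers $\beta_i-m$ with $0\le m<\beta_i$ and $m\notin\beta(\la)$, so a box of hook length $s$ exists if and only if some $x\in\beta(\la)$ satisfies $x-s\ge 0$ and $x-s\notin\beta(\la)$; negating this reproduces the criterion. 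The main obstacle is thus not any genuine difficulty but purely one of orientation and convention---correctly handling ``above versus below'' and the treatment of negative positions---as one expects for a classical lemma of James--Kerber.
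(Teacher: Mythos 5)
Your argument is correct. The paper does not prove this lemma---it cites it to James--Kerber and merely remarks that it is the beta-set translation of the ``no spacer below a bead'' abacus criterion stated just above it---so your first paragraph is exactly the equivalence the paper gestures at, and your closing sketch (hook lengths in row $i$ are the numbers $\beta_i-m$ with $0\le m<\beta_i$ and $m\notin\beta(\la)$, hence a hook of length $s$ exists iff some $x\in\beta(\la)$ has $x-s\ge 0$ and $x-s\notin\beta(\la)$) supplies the genuinely self-contained proof that the paper omits. You also correctly flag the two conventions that must be fixed for the statement to parse: the implication must be read as vacuous for $x<s$ (otherwise it would force $\beta(\la)=\emptyset$), and ``below'' refers to the paper's bottom-up drawing of the abacus, where the position below $x$ is $x-s$; your reformulation ``beads form an unbroken segment from the top of each column'' lands on the same condition. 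No gaps.
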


We now introduce the $(s+d,d)$-abacus diagram, which generalizes the definition of the $s$-abacus diagram and the two-way abacus diagram suggested by Anderson \cite{Anderson}.

\begin{defn} 
Let $s$ and $d$ be relatively prime positive integers. The \emph{$(s+d,d)$-abacus diagram} is a diagram with infinitely many rows labeled $-\infty,\dots,-1,0,1,\dots,\infty$ and $s+d+1$ columns labeled $0,1,\dots,s+d$ whose position in $(i,j)$ is labeled by $(s+d)i+dj$, where $i=-\infty,\dots,\infty$ and $j=0,\dots,s+d$. For a partition $\la$, the \emph{$(s+d,d)$-abacus} of $\la$ is obtained from the $(s+d,d)$-abacus diagram by placing a \emph{bead} on each position which the number at this position  belongs to $\beta(\la)$. Again, a position without bead is called a \emph{spacer}.
\end{defn}

\begin{example}\label{ex:abacus} If $\la=(6,4,3,1,1,1,1)$, then $\la$ is a $(5,8,11)$-core partition and $\beta(\la)=\{12,9,7,4,3,2,1\}$. Figure \ref{fig:abacus2} shows the Young diagram with the hook lengths and the $(8,3)$-abacus of $\la$.

\begin{figure}[ht!]
\centering
\begin{tikzpicture}[scale=.44]
\node at (-8,0) {
\begin{ytableau}
12&7&6&4&2&1 \\
9&4&3&1 \\
7&2&1 \\
4\\
3\\
2\\
1
\end{ytableau}};

\foreach \i in {0,3,6,9,12,15,18,21,24}
\node at (\i/1.5,0) {$\i$};
\foreach \i in {8,11,14,17,20,23,26,29,32}
\node at (\i/1.5-8/1.5,1) {$\i$};
\foreach \i in {16,19,22,25,28,31,34,37,40}
\node at (\i/1.5-16/1.5,2) {$\i$};
\foreach \i in {24,27,30,33,36,39,42,45,48}
\node at (\i/1.5-24/1.5,3) {$\i$};
\foreach \i in {-8,-5,-2,1,4,7,10,13,16}
\node at (\i/1.5+8/1.5,-1) {$\i$};
\foreach \i in {-16,-13,-10,-7,-4,-1,2,5,8}
\node at (\i/1.5+16/1.5,-2) {$\i$};
\foreach \i in {-24,-21,-18,-15,-12,-9,-6,-3,0}
\node at (\i/1.5+24/1.5,-3) {$\i$};
\node at (8,4.1) {\vdots};
\node at (8,-3.9) {\vdots};
\draw (8,0) circle (12pt);
\draw (6,0) circle (12pt);
\draw (10,-1) circle (12pt);
\draw (8,-1) circle (12pt);
\draw (6,-1) circle (12pt);
\draw (2,0) circle (12pt);
\draw (12,-2) circle (12pt);
\end{tikzpicture}
\caption{The Young diagram of the partition $(6,4,3,1,1,1,1)$ with the hook lengths and its $(8,3)$-abacus}\label{fig:abacus2}
\end{figure}
\end{example}

This modified abacus diagram is useful when we consider $(s,s+d,\dots,s+pd)$-core partitions with $p\geq 2$. For a given $(s,s+d,\dots,s+pd)$-core partition $\la$, if we consider the $(s+d,d)$-abacus of $\la$, then the position in $(i,j)$ is a bead implies that positions in $(i-1,j-p+1),(i-1,j-p+2),\dots,(i-1,j+1)$ are also beads if these positions are labeled by positive numbers as in Figure \ref{fig:abacus2}. We now have the following lemma.

\begin{lem}\label{lem:af}
For a given $p\geq2$ and the $(s+d,d)$-abacus of an $(s,s+d,\dots,s+pd)$-core partition $\la$, we define a function 
\[
f:\{0,1,\dots,s+d\}\rightarrow \{-\infty,\dots,-1,0,1,\dots,\infty\}
\]
as follows:
For a column number $j$, $f(j)$ is defined to be the row number $i$ such that the position in $(i,j)$ is a spacer which is labeled by the smallest nonnegative number in column $j$. Then $f$ satisfies the followings: 

\begin{enumerate}
\item[(a)] $f(0)=0$ and $f(s+d)=-d$.
\item[(b)] $f(j-1)$ is exactly one of the values $f(j)-1$, $f(j)$, and $f(j)+1$, for $1\leq j \leq s+d$.
\item[(c)] If $f(j-1)=f(j)-1$, then $f(j-p+1),f(j-p+2),\dots,f(j-2)\geq f(j-1)$, for $p-1 \leq j\leq s+d$.

\end{enumerate} 
\end{lem}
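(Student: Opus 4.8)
The plan is to recast everything in terms of the set $P=\mathbb{Z}_{\ge0}\setminus\beta(\la)$ of labels of nonnegative spacers and of the first gap of $P$ in each residue class modulo $s+d$. Since $\beta(\la)$ is finite, $P$ is cofinite. The key input is that for every $k\in\{0,1,\dots,p\}$ the partition $\la$ is an $(s+kd)$-core, so the contrapositive of Lemma~\ref{lem:core} (with modulus $s+kd$) says that $P$ is closed under adding $s+kd$: if $y\in P$ then $y+(s+kd)\in P$. For a residue $\rho$ modulo $s+d$ write $M(\rho)$ for the least element of $P$ congruent to $\rho$, which exists because $P$ is cofinite. As the labels occurring in column $j$ are exactly the integers congruent to $\rho_j:=dj\bmod(s+d)$, the definition of $f$ unwinds to the single identity $M(\rho_j)=(s+d)f(j)+dj$ for $0\le j\le s+d$ (including the boundary column $j=s+d$, where $\rho_{s+d}=0$). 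Thus (a)--(c) become inequalities among the numbers $M(\rho_j)$. Part (a) is then immediate: no first-column hook length is $0$, so $0\notin\beta(\la)$ and $M(0)=0$; since $\rho_0=\rho_{s+d}=0$, the identity gives $f(0)=0$ and $f(s+d)=-d$.

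For (b) I would exploit that adding $s$ or $s+2d$ shifts residues by one step, because $s\equiv-d$ and $s+2d\equiv d\pmod{s+d}$. Applying the closure of $P$ under $+s$ to $M(\rho_j)$ yields an element of $P$ in class $\rho_{j-1}$, hence $M(\rho_{j-1})\le M(\rho_j)+s$; applying the closure under $+(s+2d)$ to $M(\rho_{j-1})$ yields an element of $P$ in class $\rho_j$, hence $M(\rho_j)\le M(\rho_{j-1})+(s+2d)$. Substituting the identity and simplifying collapses these to $f(j)-1\le f(j-1)\le f(j)+1$, which is exactly (b). Note that the lower bound uses the $(s+2d)$-core condition, which is where the hypothesis $p\ge2$ is needed.

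For (c) I would first translate the hypothesis $f(j-1)=f(j)-1$, via the identity, into the exact equality $M(\rho_j)=M(\rho_{j-1})+s+2d$. Fix $j'$ with $j-p+1\le j'\le j-2$ and put $k'=j+1-j'$, so that $3\le k'\le p$ and $\la$ is an $(s+k'd)$-core. Applying the closure of $P$ under $+(s+k'd)$ to $M(\rho_{j'})$ lands in class $\rho_j$ (since $(k'-1)d\equiv d(j-j')\pmod{s+d}$), giving $M(\rho_j)\le M(\rho_{j'})+s+(j+1-j')d$. Feeding in the equality from the hypothesis and cancelling $s+2d$ produces $M(\rho_{j-1})\le M(\rho_{j'})+(j-1-j')d$, and one more application of the identity, cancelling the common term $d(j-1)$, reduces this to $f(j')\ge f(j-1)$. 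Running this over all $j'$ in the window establishes (c).

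The delicate point is (c). A single closure step only gives $M(\rho_j)\le M(\rho_{j'})+s+(j+1-j')d$, which on its own is too weak to force $f(j')\ge f(j-1)$; it is precisely the extra $s+2d$ of slack carried by the equality hypothesis that sharpens it into the desired bound. The rest is careful bookkeeping: verifying that each residue shift is computed correctly modulo $s+d$, that every modulus invoked lies in $\{s,s+d,\dots,s+pd\}$ (so that $p\ge2$ suffices for (b) while the window in (c) is nonempty only when $p\ge3$), and that the wrap-around at the boundary column $j=s+d$ is consistent with the identity $M(\rho_j)=(s+d)f(j)+dj$.
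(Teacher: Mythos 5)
Your proof is correct and follows essentially the same route as the paper: both arguments come down to applying Lemma~\ref{lem:core} for each modulus $s+kd$ with $0\le k\le p$, which in your language is the closure of $P=\mathbb{Z}_{\ge0}\setminus\beta(\la)$ under adding $s+kd$, and in the paper's language is the statement that a bead at $(i,j)$ forces beads (or negative labels) at the appropriate positions in nearby columns one row down. Your identity $M(\rho_j)=(s+d)f(j)+dj$ is just an explicit arithmetization of ``smallest nonnegative spacer in column $j$,'' and the three residue shifts you use ($+s$, $+(s+2d)$, $+(s+k'd)$) correspond exactly to the diagonal bead implications the paper invokes.
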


\begin{proof}
$(a)$ is clear by the definition. Note that $f(j)=i$ implies that the position in $(i-1,j)$ is either a bead or labeled by a negative number. If the position in $(i-1,j)$ is a bead, then the position in $(i-2,j-1)$ is also either a bead or labeled by a negative number by Lemma \ref{lem:core}. Otherwise, the position in $(i-1,j-1)$ is also labeled by a negative number and $f(j-1)=i$. Therefore, $f(j-1)\geq f(j)-1$. We can also see that $f(j)\geq f(j-1)-1$. By combining these two inequalities, we conclude the result in $(b)$. Similarly, $f(j-p+1),f(j-p+2),\dots,f(j-2)\geq f(j)-1$. It follows that if $f(j-1)=f(j)-1$, then $f(j-p+1),f(j-p+2),\dots,f(j-2) \geq f(j-1)$.
\end{proof}

In Figure \ref{fig:abacus2}, since the smallest nonnegative numbers labeled by a spacer in each column are $0,11,6,17,20$,
$15,10,5,0$, respectively, so $f(0)=0$, $f(1)=1$, $f(2)=0$, $f(3)=1$, $f(4)=1$, $f(5)=0$, $f(6)=-1$, $f(7)=-2$, and $f(8)=-3$. We see that $f$ agrees with $(a)$ and $(b)$ of Lemma \ref{lem:af}.

\subsection{Rational Motzkin paths of type $(s,t)$}

A \emph{Motzkin path of length $s$} is a path from $(0,0)$ to $(s,0)$ stays above the $x$-axis and consists of up steps $(1,1)$, down steps $(1,-1)$, and flat steps $(1,0)$. We introduce a path which generalizes the idea of the Motzkin path. 

\begin{defn} \label{def:rational}
A \emph{rational Motzkin path of type $(s,t)$} is a path from $(0,0)$ to $(s,t)$ stays above the line $y=tx/s$ and consists of up steps $U=(1,1)$, down steps $D=(1,-1)$, and flat steps $F=(1,0)$. If a rational Motzkin path of type $(s,t)$ is allowed to go below the line $y=tx/s$, then it is called a \emph{free rational Motzkin path} of type $(s,t)$. 
\end{defn}

\begin{figure}[ht!]
\centering
\begin{tikzpicture}[scale=.34]
\foreach \i in {0,1,2,3,4,5}
\draw[dotted, gray!60] (\i,-2)--(\i,1);

\foreach \i in {0,1,2,3}
\draw[dotted, gray!60] (0,-2+\i)--(5,-2+\i);

\draw[->, black!70] (0,0)--(6,0);
\draw[->, black!70] (0,-3)--(0,2);
\draw[dotted, gray!60] (0,0)--(5,-2);

\node[below, black!70] at (6,0) {$x$};
\node[left,black!70] at (0,2) {$y$};

\draw[thick] (0,0)--(1,0)--(2,0)--(3,-1)--(4,-1)--(5,-2);
\end{tikzpicture}
\begin{tikzpicture}[scale=.34]
\foreach \i in {0,1,2,3,4,5}
\draw[dotted, gray!60] (\i,-2)--(\i,1);

\foreach \i in {0,1,2,3}
\draw[dotted, gray!60] (0,-2+\i)--(5,-2+\i);

\draw[->, black!70] (0,0)--(6,0);
\draw[->, black!70] (0,-3)--(0,2);
\draw[dotted, gray!60] (0,0)--(5,-2);

\node[below, black!70] at (6,0) {$x$};
\node[left,black!70] at (0,2) {$y$};

\draw[thick] (0,0)--(1,0)--(2,0)--(3,0)--(4,-1)--(5,-2);
\end{tikzpicture}
\begin{tikzpicture}[scale=.34]
\foreach \i in {0,1,2,3,4,5}
\draw[dotted, gray!60] (\i,-2)--(\i,1);

\foreach \i in {0,1,2,3}
\draw[dotted, gray!60] (0,-2+\i)--(5,-2+\i);

\draw[->, black!70] (0,0)--(6,0);
\draw[->, black!70] (0,-3)--(0,2);
\draw[dotted, gray!60] (0,0)--(5,-2);

\node[below, black!70] at (6,0) {$x$};
\node[left,black!70] at (0,2) {$y$};

\draw[thick] (0,0)--(1,1)--(2,0)--(3,0)--(4,-1)--(5,-2);
\end{tikzpicture}
\begin{tikzpicture}[scale=.34]
\foreach \i in {0,1,2,3,4,5}
\draw[dotted, gray!60] (\i,-2)--(\i,1);

\foreach \i in {0,1,2,3}
\draw[dotted, gray!60] (0,-2+\i)--(5,-2+\i);

\draw[->, black!70] (0,0)--(6,0);
\draw[->, black!70] (0,-3)--(0,2);
\draw[dotted, gray!60] (0,0)--(5,-2);

\node[below, black!70] at (6,0) {$x$};
\node[left,black!70] at (0,2) {$y$};

\draw[thick] (0,0)--(1,1)--(2,0)--(3,-1)--(4,-1)--(5,-2);
\end{tikzpicture}
\begin{tikzpicture}[scale=.34]
\foreach \i in {0,1,2,3,4,5}
\draw[dotted, gray!60] (\i,-2)--(\i,1);

\foreach \i in {0,1,2,3}
\draw[dotted, gray!60] (0,-2+\i)--(5,-2+\i);

\draw[->, black!70] (0,0)--(6,0);
\draw[->, black!70] (0,-3)--(0,2);
\draw[dotted, gray!60] (0,0)--(5,-2);

\node[below, black!70] at (6,0) {$x$};
\node[left,black!70] at (0,2) {$y$};

\draw[thick] (0,0)--(1,1)--(2,1)--(3,0)--(4,-1)--(5,-2);
\end{tikzpicture}
\begin{tikzpicture}[scale=.34]
\foreach \i in {0,1,2,3,4,5}
\draw[dotted, gray!60] (\i,-2)--(\i,1);

\foreach \i in {0,1,2,3}
\draw[dotted, gray!60] (0,-2+\i)--(5,-2+\i);

\draw[->, black!70] (0,0)--(6,0);
\draw[->, black!70] (0,-3)--(0,2);
\draw[dotted, gray!60] (0,0)--(5,-2);

\node[below, black!70] at (6,0) {$x$};
\node[left,black!70] at (0,2) {$y$};

\draw[thick] (0,0)--(1,0)--(2,1)--(3,0)--(4,-1)--(5,-2);
\end{tikzpicture}
\caption{All rational Motzkin paths of type $(5,-2)$}\label{fig:Motzkin}
\end{figure}

We note that if $P=P_1P_2\cdots P_{s+1}$ is a rational Motzkin path of type $(s+1,-1)$, then the last step $P_{s+1}$ must be a down step and the subpath $\bar{P}=P_1P_2\cdots P_{s}$ is a Motzkin path of length $s$.

\begin{prop}\label{prop:coremotz}
Let $s$ and $d$ be relatively prime positive integers. For $p\geq2$, there is a bijection between the set of $(s,s+d,\dots,s+pd)$-core partitions and that of rational Motzkin paths of type $(s+d,-d)$ without $UF^iU$ steps for $i=0,1,\dots,p-3$ if $p\geq 3$. 
\end{prop}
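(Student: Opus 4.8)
The plan is to realize the bijection through the ``spacer profile'' $f$ of Lemma \ref{lem:af}, read as the sequence of heights of a lattice path. Given an $(s,s+d,\dots,s+pd)$-core $\la$, I would map it to the path $P_\la$ whose height at $x=j$ equals $f(j)$ for $j=0,1,\dots,s+d$, so that the $j$th step is $U$, $F$, or $D$ according to whether $f(j)-f(j-1)$ is $+1$, $0$, or $-1$. Part (a) of Lemma \ref{lem:af} places the endpoints at $(0,0)$ and $(s+d,-d)$, part (b) guarantees every step lies in $\{U,F,D\}$, and the defining property of $f$, that $f(j)$ is the row of a spacer with nonnegative label, gives $(s+d)f(j)+dj\ge 0$, i.e. $f(j)\ge -dj/(s+d)$; this is exactly the condition that $P_\la$ stay weakly above the line $y=-dx/(s+d)$ (checking integer points suffices, since the steps and the line are straight). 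Hence $P_\la$ is a rational Motzkin path of type $(s+d,-d)$.

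Next I would translate condition (c) into the forbidden-factor language. A factor $UF^iU$ consists of an up-step at some position $k$, flat steps at $k+1,\dots,k+i$, and an up-step at $j:=k+i+1$; then $f(k-1)=f(j-1)-1<f(j-1)$, and $k-1$ lies in the window $[j-p+1,j-2]$ exactly when $i\le p-3$, so such a factor violates (c). For the converse I would take a violation of (c) at an up-step $j$, choose the largest $a\le j-2$ with $f(a)\le f(j-1)-1$, and track the first ascent of the path above the level $f(j-1)$ (or, if there is none, the run of flats up to $j-1$); in either case one extracts an up-step, a maximal block of flats, and a following up-step forming a factor $UF^{i'}U$ with $0\le i'\le p-3$. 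This shows that, for paths arising from a profile, (c) holds if and only if $P_\la$ contains no $UF^iU$ with $i\le p-3$.

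For injectivity and surjectivity I would build the inverse. Given a rational Motzkin path $P$ of type $(s+d,-d)$ with no $UF^iU$ for $i\le p-3$, set $f(j)$ to be the height of $P$ at $x=j$; then (a), (b), and the above-line property hold by construction, and (c) holds by the equivalence just proved. Using the identification of column $0$ with column $s+d$ (they carry the same labels because $\gcd(s+d,d)=1$), I would reconstruct $\beta(\la)$ by declaring, in each runner $j$, every nonnegatively labelled position in a row below $f(j)$ to be a bead; this is a finite set of nonnegative integers and hence the beta-set of a unique partition $\la$. The delicate step is the verification that this $\la$ really is an $(s,s+d,\dots,s+pd)$-core, i.e. that the reconstructed beads satisfy the $p+1$ flush conditions recorded before Lemma \ref{lem:af}: a bead at $(i,j)$ must force beads at $(i-1,j+1),(i-1,j),\dots,(i-1,j-p+1)$.

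The main obstacle, and the technical heart of the argument, is exactly this flush verification, which I expect to reduce to the propagation claim that (b) and (c) force $f(j-m)\ge f(j)-1$ for all $j$ and all $1\le m\le p-1$. Granting the claim, it suffices to check flush for the topmost bead $(f(j)-1,j)$ of each runner: the neighbouring columns $j+1,j,j-1$ are immediate from (b), while for $j-2,\dots,j-p+1$ the claim gives $f(j-m)\ge f(j)-1$, which is precisely what is needed to place the bead at $(f(j)-2,j-m)$. To prove the claim I would argue by contradiction: if $f(j-m)\le f(j)-2$ for some $m\le p-1$, let $w\le j-2$ be the largest index with $f(w)\le f(j)-2$; then $f(w)=f(j)-2$, step $w+1$ is an up-step, and the path, staying $\ge f(j)-1$ on $(w,j]$, first reaches level $f(j)$ at some up-step $z$ with $w\in[z-p+1,z-2]$, whereupon (c) applied at $z$ yields $f(w)\ge f(j)-1$, a contradiction. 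This same propagation underlies both the reconstruction and the reverse direction of the forbidden-factor equivalence, so once it is in hand the remaining bookkeeping, that the two maps are mutually inverse, is routine.
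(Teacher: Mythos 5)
Your proposal is correct and follows exactly the paper's approach: reading the spacer-profile $f$ of Lemma \ref{lem:af} as the height sequence of the path on the $(s+d,d)$-abacus. The paper's own proof is a one-sentence sketch that only states the forward direction, whereas you supply the pieces it leaves implicit — the equivalence of condition (c) with the absence of $UF^iU$ factors, the propagation claim $f(j-m)\ge f(j)-1$ for $1\le m\le p-1$, and the reconstruction of the bead set from a path — all of which check out.
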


\begin{proof}
For an $(s,s+d,\dots,s+pd)$-core partition $\la$, we consider the $(s+d,d)$-abacus of $\la$. By Lemma \ref{lem:af}, we can construct a rational Motzkin path of type $(s+d,-d)$ without $UF^iU$ steps for $i=0,1,\dots,p-3$ if $p\geq 3$,
by connecting the spacers which is labeled by the smallest nonnegative number in each column.
\end{proof}

\begin{example}
If $\la=(9,5,3,2,2,1,1,1,1)$, then $\la$ is a $(5,8,11,14)$-core partition and $\beta(\la)=\{17,12,9,7,6,4,3,2,1\}$. Figure \ref{fig:motzkin2} shows the corresponding path $P$ of $\la$. The path $P=UFUDDDDD$ is a rational Motzkin path of type $(8,-3)$ without $UU$ steps.
\end{example}

\begin{figure}[ht!]
\centering
\begin{tikzpicture}[scale=.44]
\foreach \i in {0,3,6,9,12,15,18,21,24}
\node at (\i/1.5,0) {$\i$};
\foreach \i in {8,11,14,17,20,23,26,29,32}
\node at (\i/1.5-8/1.5,1) {$\i$};
\foreach \i in {16,19,22,25,28,31,34,37,40}
\node at (\i/1.5-16/1.5,2) {$\i$};
\foreach \i in {24,27,30,33,36,39,42,45,48}
\node at (\i/1.5-24/1.5,3) {$\i$};
\foreach \i in {-8,-5,-2,1,4,7,10,13,16}
\node at (\i/1.5+8/1.5,-1) {$\i$};
\foreach \i in {-16,-13,-10,-7,-4,-1,2,5,8}
\node at (\i/1.5+16/1.5,-2) {$\i$};
\foreach \i in {-24,-21,-18,-15,-12,-9,-6,-3,0}
\node at (\i/1.5+24/1.5,-3) {$\i$};
\node at (8,4.1) {\vdots};
\node at (8,-3.9) {\vdots};
\draw (6,1) circle (12pt);
\draw (2,0) circle (12pt);
\draw (4,0) circle (12pt);
\draw (6,0) circle (12pt);
\draw (8,0) circle (12pt);
\draw (6,-1) circle (12pt);
\draw (8,-1) circle (12pt);
\draw (10,-1) circle (12pt);
\draw (12,-2) circle (12pt);
\draw (0,0)--(2,1)--(4,1)--(6,2)--(8,1)--(10,0)--(12,-1)--(14,-2)--(16,-3);
\end{tikzpicture}
\qquad \quad
\begin{tikzpicture}[scale=.44]
\node at (0,-4.8) {};
\foreach \i in {0,1,2,3,4,5,6,7,8}
\draw[dotted, gray!60] (\i,-3)--(\i,2);

\foreach \i in {0,1,2,3,4,5}
\draw[dotted, gray!60] (0,-3+\i)--(8,-3+\i);

\draw[->, black!70] (0,0)--(9,0);
\draw[->, black!70] (0,-4)--(0,3);
\draw[dotted, gray!60] (0,0)--(8,-3);

\node[below, black!70] at (9,0) {$x$};
\node[left, black!70] at (0,3) {$y$};

\draw[thick] (0,0)--(1,1)--(2,1)--(3,2)--(4,1)--(5,0)--(6,-1)--(7,-2)--(8,-3);
\end{tikzpicture}
\caption{The corresponding rational Motzkin path of the partition $(9,5,3,2,2,1,1,1,1)$}\label{fig:motzkin2}
\end{figure}

To count the number of rational Motzkin paths of type $(s+d,-d)$, we use the cyclic shifting of paths (see \cite{Bizley,Loehr}). 
For a path $P=P_1P_2\cdots P_s$, the cyclic shift $\sigma(P)$ of $P$ is $\sigma(P)=P_2P_3\cdots P_sP_1$. Iteratively, $\sigma^i(P)=P_{i+1}\cdots P_{s}P_{1}\cdots P_i$, for $i=1,\dots,s-1$ and $\sigma^0(P)=P$. 

\begin{lem}\label{lem:cyclic}
Let $s$ and $d$ be relatively prime positive integers and $P=P_1P_2\cdots P_{s+d}$ be a free rational Motzkin path of type $(s+d,-d)$. Then there exists a unique cyclic shift $\sigma^j(P)$ of $P$ such that $\sigma^j(P)$ is a rational Motzkin path of type $(s+d,-d)$ for $j=0,1,2,\dots,s+d-1$. 
\end{lem}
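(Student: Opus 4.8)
The plan is to establish a cyclic lemma in the spirit of the classical Dvoretzky--Motzkin cycle lemma, adapted to the three-letter step alphabet $\{U,D,F\}$. First I would translate the geometric condition ``$\sigma^j(P)$ stays above the line $y=-dx/(s+d)$'' into an arithmetic condition on partial sums. To each step $P_i$ assign a weight $w_i$ that records its vertical displacement relative to the reference line; concretely, replace the step heights by integers so that ``staying weakly above the line $y=-dx/(s+d)$'' becomes ``all partial sums of the rescaled weights are nonnegative.'' Since the total vertical drop over $s+d$ steps is exactly $-d$ and $\gcd(s+d,d)=1$, I would clear denominators by working with the scaled weights $(s+d)\cdot(\text{step slope}+d/(s+d))$, so that each $U$ contributes $s+2d$, each $F$ contributes $d$, and each $D$ contributes $-s$, and the total is $(s+d)\cdot 0$... \emph{after correcting for the endpoint}: the honest bookkeeping is that the sum of scaled weights over the whole path equals $0$ only if we subtract the net line-drop, so I would instead use weights whose total sum is a single positive quantity coprime to $s+d$. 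The cleanest device is to shift each weight by a common amount so that the total becomes $1$ modulo the right lattice, or equivalently to argue directly with the \emph{lowest-point} selection below.

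The key step is the standard cyclic-lemma argument via the unique lowest point. Consider the partial-sum sequence (heights of the lattice path relative to the reference line, measured at integer abscissae $0,1,\dots,s+d$, read cyclically). Because $\gcd(s+d,d)=1$, the reference line $y=-dx/(s+d)$ passes through no lattice point with $0<x<s+d$; hence the $s+d$ relevant ``relative heights'' at the cyclic shift-points are pairwise distinct. I would make this precise: two cyclic shifts $\sigma^i(P)$ and $\sigma^{j}(P)$ start at the same relative height exactly when the reference line meets two distinct lattice points of the extended periodic path, which forces $(s+d)\mid d(i-j)$, and coprimality forces $i=j$. Among the $s+d$ distinct starting relative heights there is therefore a unique minimum, attained at a unique index $j_0$; the cyclic shift $\sigma^{j_0}(P)$ is the one whose every partial sum is $\geq$ its starting value, i.e. the unique shift that stays weakly above the line. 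This simultaneously proves existence and uniqueness.

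The main obstacle I expect is the bookkeeping of the ``weakly above the line'' convention together with the distinctness claim at the \emph{endpoints}: one must ensure that the comparison is done at all $s+d$ integer abscissae and that the periodic extension of $P$ (repeating the step word with a vertical drift of $-d$ per period) makes the cyclic shifts literally into translated copies of a single bi-infinite staircase. I would set up this bi-infinite path $\widetilde P$ with $\widetilde P_{i+(s+d)}=\widetilde P_i$ (as step types) and heights satisfying $h(i+(s+d))=h(i)-d$, then observe that $\sigma^{j}(P)$ is exactly the length-$(s+d)$ window of $\widetilde P$ starting at index $j$, re-based to start at height $0$. The condition that this window stays weakly above its own reference line becomes the condition that the window's starting point is a ``record low'' of the drifting staircase over one period, and the coprimality argument shows there is exactly one such record low per period. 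I would then note that the step types $U,D,F$ play no special role beyond contributing displacements in $\{+1,-1,0\}$, so no case analysis on step type is needed; the argument is purely about the distinctness of relative heights forced by $\gcd(s+d,d)=1$.
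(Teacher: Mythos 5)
Your proposal is correct and follows essentially the same route as the paper: the paper defines the label vector $\ell_i$ via exactly your scaled weights ($U\mapsto s+2d$, $F\mapsto d$, $D\mapsto -s$), notes that coprimality of $s$ and $d$ makes the $\ell_i$ pairwise distinct except at the two endpoints, and selects the unique minimum as the unique valid shift. Your brief detour about renormalizing the total to $1$ is unnecessary (the total is $0$ and the distinct-minimum argument you then give is the right one), but the proof as you finally state it matches the paper's.
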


\begin{proof}
We define the label vector $L(P)=(\ell_0,\dots,\ell_{s+d})$ of $P$ as follows. We set $\ell_0=0$, $\ell_i=\ell_{i-1}+s+2d$ if $P_i=U$, $\ell_i=\ell_{i-1}+d$ if $P_i=F$, and $\ell_i=\ell_{i-1}-s$ if $P_i=D$. According to $s$ and $d$ are relatively prime, $\ell_0,\ell_1,\dots,\ell_{s+d}$ are distinct, except $\ell_0=\ell_{s+d}=0$. 
We note that for a free rational Motzkin path $P$, $P$ is a rational Motzkin path if and only if every elements of $L(P)$ are nonnegative. 
By the definition of $L$, $L(\sigma^i(P))=(0,\ell_{i+1}-\ell_{i},\ell_{i+2}-\ell_{i},\dots,\ell_{s+d}-\ell_{i},\ell_{1}-\ell_{i},\ell_{2}-\ell_{i},\dots,\ell_{i}-\ell_{i})$. Therefore if $\ell_j$ is the smallest element of $L(P)$, then elements of $L(\sigma^j(P))$ are all nonnegative. Also, the smallest element of $L(\sigma^i(P))$ is $\ell_{j}-\ell_{i}$, so $\sigma^i(P)$'s are all distinct. Moreover, if $j\neq i$, then $\sigma^j(P)$ is not a rational Motzkin path.
\end{proof}

\begin{example}
Let $P=DDFUD$ be a free rational Motzkin path of type $(5,-2)$. Then, $L(P)=(0,-3,-6,-4,3,0)$. The smallest element of $L(P)$ is $\ell_2=-6$, so that $L(\sigma^2(P))=(0,2,9,6,3,0)$ and $\sigma^2(P)=FUDDD$ is a rational Motzkin path of type $(5,-2)$ as in Figure \ref{fig:cyclic}.
\end{example}

\begin{figure}[ht!]
\centering
\begin{tikzpicture}[scale=.34]
\foreach \i in {0,1,2,3,4,5}
\draw[dotted, gray!60] (\i,-3)--(\i,1);

\foreach \i in {0,1,2,3,4}
\draw[dotted, gray!60] (0,-3+\i)--(5,-3+\i);

\draw[->, black!70] (0,0)--(6,0);
\draw[->, black!70] (0,-4)--(0,2);
\draw[dotted, gray!60] (0,0)--(5,-2);

\node[below, black!70] at (6,0) {$x$};
\node[left, black!70] at (0,2) {$y$};

\draw[thick] (0,0)--(1,-1)--(2,-2)--(3,-2)--(4,-1)--(5,-2);
\end{tikzpicture}
\quad\begin{tikzpicture}[scale=.34]
\foreach \i in {0,1,2,3,4,5}
\draw[dotted, gray!60] (\i,-3)--(\i,1);

\foreach \i in {0,1,2,3,4}
\draw[dotted, gray!60] (0,-3+\i)--(5,-3+\i);

\draw[->, black!70] (0,0)--(6,0);
\draw[->, black!70] (0,-4)--(0,2);
\draw[dotted, gray!60] (0,0)--(5,-2);

\node[below, black!70] at (6,0) {$x$};
\node[left, black!70] at (0,2) {$y$};

\draw[thick] (0,0)--(1,-1)--(2,-1)--(3,0)--(4,-1)--(5,-2);
\end{tikzpicture}
\quad\begin{tikzpicture}[scale=.34]
\foreach \i in {0,1,2,3,4,5}
\draw[dotted, gray!60] (\i,-3)--(\i,1);

\foreach \i in {0,1,2,3,4}
\draw[dotted, gray!60] (0,-3+\i)--(5,-3+\i);

\draw[->, black!70] (0,0)--(6,0);
\draw[->, black!70] (0,-4)--(0,2);
\draw[dotted, gray!60] (0,0)--(5,-2);

\node[below, black!70] at (6,0) {$x$};
\node[left, black!70] at (0,2) {$y$};

\draw[thick] (0,0)--(1,0)--(2,1)--(3,0)--(4,-1)--(5,-2);
\end{tikzpicture}
\quad\begin{tikzpicture}[scale=.34]
\foreach \i in {0,1,2,3,4,5}
\draw[dotted, gray!60] (\i,-3)--(\i,1);

\foreach \i in {0,1,2,3,4}
\draw[dotted, gray!60] (0,-3+\i)--(5,-3+\i);

\draw[->, black!70] (0,0)--(6,0);
\draw[->, black!70] (0,-4)--(0,2);
\draw[dotted, gray!60] (0,0)--(5,-2);

\node[below, black!70] at (6,0) {$x$};
\node[left, black!70] at (0,2) {$y$};

\draw[thick] (0,0)--(1,1)--(2,0)--(3,-1)--(4,-2)--(5,-2);
\end{tikzpicture}
\quad\begin{tikzpicture}[scale=.34]
\foreach \i in {0,1,2,3,4,5}
\draw[dotted, gray!60] (\i,-3)--(\i,1);

\foreach \i in {0,1,2,3,4}
\draw[dotted, gray!60] (0,-3+\i)--(5,-3+\i);

\draw[->, black!70] (0,0)--(6,0);
\draw[->, black!70] (0,-4)--(0,2);
\draw[dotted, gray!60] (0,0)--(5,-2);

\node[below, black!70] at (6,0) {$x$};
\node[left, black!70] at (0,2) {$y$};

\draw[thick] (0,0)--(1,-1)--(2,-2)--(3,-3)--(4,-3)--(5,-2);
\end{tikzpicture}
\caption{Five cyclic shifts of the path $P=DDFUD$}\label{fig:cyclic}
\end{figure}

Now we can enumerate the rational Motzkin paths of type $(s+d,-d)$.

\begin{prop}\label{prop:freemotz}
Let $s$ and $d$ be relatively prime positive integers. For a given integer $0\leq k\leq \lfloor s/2\rfloor$, the number of rational Motzkin paths of type $(s+d,-d)$ having $k$ up steps is 
\[
\frac{1}{s+d}\,\binom{s+d}{k,k+d,s-2k}\,.
\]
Consequently, the number of rational Motzkin paths of type $(s+d,-d)$ is 
\[
\frac{1}{s+d}\sum_{k=0}^{\lfloor s/2 \rfloor}\binom{s+d}{k,k+d,s-2k}\,.
\]
\end{prop}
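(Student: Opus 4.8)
The plan is to apply the cycle lemma (Lemma \ref{lem:cyclic}) to reduce the count to a straightforward enumeration of \emph{free} rational Motzkin paths, which is just a multinomial count.

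First I would pin down the step composition. A free rational Motzkin path of type $(s+d,-d)$ uses exactly $s+d$ unit steps; if it has $k$ up steps, $m$ down steps, and $f$ flat steps, then counting steps gives $k+m+f = s+d$ while tracking the net vertical change gives $k-m = -d$. Solving these yields $m = k+d$ and $f = s-2k$. Thus a free path with $k$ up steps is precisely an arbitrary word in $k$ letters $U$, $k+d$ letters $D$, and $s-2k$ letters $F$, so there are exactly $\binom{s+d}{k,k+d,s-2k}$ of them. Note that $f = s-2k \geq 0$ forces $0 \leq k \leq \lfloor s/2 \rfloor$, which explains the admissible range.

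Next I would invoke Lemma \ref{lem:cyclic}. The cyclic-shift operator $\sigma$ acts on the set of free rational Motzkin paths of type $(s+d,-d)$, and since it merely rotates the word of steps, it preserves the number of up steps. By the lemma, for each free path $P$ the shifts $\sigma^0(P),\dots,\sigma^{s+d-1}(P)$ are pairwise distinct and exactly one of them is a genuine (non-free) rational Motzkin path. Hence $\mathbb{Z}/(s+d)$ partitions the free paths with $k$ up steps into orbits of size exactly $s+d$, each containing precisely one rational Motzkin path. Dividing the multinomial count by the orbit size gives $\frac{1}{s+d}\binom{s+d}{k,k+d,s-2k}$ as the number of rational Motzkin paths with $k$ up steps, establishing the first assertion.

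Finally I would sum this count over the admissible range $0 \leq k \leq \lfloor s/2 \rfloor$ to obtain the stated total. I expect no genuine obstacle once Lemma \ref{lem:cyclic} is in hand; the single point deserving care is the observation that cyclic shifting is step-type preserving (it permutes the multiset of steps), which is what legitimizes applying the cycle lemma within each fixed value of $k$ separately, rather than only to the entire collection of free paths at once.
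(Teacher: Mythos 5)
Your proposal is correct and follows essentially the same argument as the paper: count the free rational Motzkin paths with $k$ up steps by the multinomial coefficient, then apply Lemma \ref{lem:cyclic} to divide by $s+d$. Your additional remarks on the step composition and on cyclic shifts preserving the number of up steps are sound elaborations of details the paper leaves implicit.
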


\begin{proof}
It is immediate that the number of free rational Motzkin paths of type $(s+d,-d)$ having $k$ up steps is $\binom{s+d}{k,k+d,s-2k}$. By Lemma \ref{lem:cyclic}, $(s+d)$ times the number of rational Motzkin paths of type $(s+d,-d)$ having $k$ up steps is equal to the number of free rational Motzkin paths of type $(s+d,-d)$ having $k$ up steps. This completes the proof.
\end{proof}

By Propositions \ref{prop:coremotz} and \ref{prop:freemotz}, we give an alternating proof of Theorem \ref{thm:Wang} using path enumeration. Also, we can use the cyclic shifting for rational Motzkin paths of type $(s+d,-d)$ without $UF^iU$ steps for $i=0,1,\dots,p-3$. We say that a path $P$ is a free rational Motzkin paths of type $(s+d,-d)$ without \emph{cyclic} $UF^iU$ steps if there is no $UF^iU$ steps for any cyclic shift of $P$.

\begin{prop}\label{prop:mainprop}
Let $s$ and $d$ be relatively prime positive integers. For integers $p\geq 3$ and $1\leq k\leq \lfloor s/2 \rfloor$, the number of rational Motzkin paths of type $(s+d,-d)$ having $k$ up steps and no $UF^iU$ steps for all $i=0,1,\dots,p-3$ is 
\[
\frac{1}{k+d}\sum_{\ell=0}^{r}\binom{k+d}{k-\ell}\binom{k-1}{\ell}\binom{s+d-\ell(p-2)-1}{2k+d-1}\,,
\]
where $r=\min(k-1,\,\lfloor (s-2k)/(p-2) \rfloor)$.
\end{prop}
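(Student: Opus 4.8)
The plan is to count these paths as cyclic objects and then read off the genuine (boundary-respecting) ones via the cyclic shift of Lemma~\ref{lem:cyclic}. I would write a candidate path as a word of length $s+d$ in the letters $U,D,F$; since there are $k$ up steps and the endpoint is $(s+d,-d)$, there are automatically $k+d$ down steps and $s-2k$ flat steps, which forces the range $0\le k\le\lfloor s/2\rfloor$. The condition of having no $UF^iU$ factor for $0\le i\le p-3$ is not a priori preserved by cyclic shifts at the ``seam'', which is exactly why one works with free paths having no \emph{cyclic} $UF^iU$ factor. First I would show that for a genuine rational Motzkin path of type $(s+d,-d)$ the last step is always $D$: at $x=s+d-1$ the bounding line $y=-dx/(s+d)$ lies strictly above $-d$, so an integer-height path ending at $-d$ must arrive at height $-d+1$ and step down. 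Consequently no forbidden factor can straddle the seam, so linear avoidance of the genuine representative is equivalent to cyclic avoidance of its whole cyclic class. By Lemma~\ref{lem:cyclic} each cyclic class contains exactly one genuine path, whence counting genuine avoiding paths with $k$ up steps is the same as counting cyclic-avoiding classes (necklaces) with $k$ up steps.

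Next I would decompose such a necklace along its up steps. Reading cyclically, the $k$ up steps cut the word into $k$ arcs, each a word in $\{F,D\}$; call an arc \emph{pure} if it contains no down step. The avoidance condition says exactly that every pure arc has length at least $p-2$, while arcs containing a down step are unconstrained. Let $\ell$ be the number of pure arcs. Since the $k+d\ge 1$ down steps must lie in the remaining $k-\ell$ arcs we need $\ell\le k-1$, and since each pure arc consumes at least $p-2$ of the $s-2k$ flats we need $\ell(p-2)\le s-2k$; together these give the summation range $0\le\ell\le r=\min\!\big(k-1,\lfloor (s-2k)/(p-2)\rfloor\big)$. The factor $\binom{k-1}{\ell}$ will record the choice of which arcs are pure, measured relative to a cyclic reference.

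For fixed $\ell$ the remaining data split into three independent parts. The $k+d$ down steps must be distributed among the $k-\ell$ non-pure arcs with each receiving at least one; arranging these down-blocks around the cycle and quotienting by the single rotational symmetry of the necklace produces the number of cyclic compositions of $k+d$ into $k-\ell$ parts, which is $\tfrac{1}{k+d}\binom{k+d}{k-\ell}=\tfrac{1}{k-\ell}\binom{k+d-1}{k-\ell-1}$. After reserving $p-2$ flats inside each of the $\ell$ pure arcs, the remaining $s-2k-\ell(p-2)$ flats are inserted freely into the gaps created by the $2k+d$ non-flat steps; a stars-and-bars count of placing $s-2k-\ell(p-2)$ identical flats into these $2k+d$ slots gives $\binom{(s-2k-\ell(p-2))+(2k+d)-1}{(2k+d)-1}=\binom{s+d-\ell(p-2)-1}{2k+d-1}$. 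Multiplying the three factors and summing over $\ell$ yields the asserted formula; summing the result over $k\ge 1$ and adjoining the $k=0$ contribution $\tfrac{1}{s+d}\binom{s+d}{d}$ from Proposition~\ref{prop:freemotz} then recovers the count in the main theorem.

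I expect the main obstacle to be the cyclic bookkeeping: showing that selecting the pure arcs and distributing the down-blocks around the same cycle must be quotiented by the rotational symmetry exactly once, so that the two combinatorial factors combine to the single normalisation $\tfrac{1}{k+d}$ and the arc-selection factor is $\binom{k-1}{\ell}$ rather than $\binom{k}{\ell}$. Making this precise---most safely by cutting the necklace immediately before a distinguished down step to obtain a bona fide linear count, and then checking, via Lemma~\ref{lem:cyclic} together with the relative primality of $s$ and $d$ (which guarantees that all shifts are distinct and that the relevant compositions are aperiodic), that no configuration is over- or under-counted---is the delicate step; by contrast, the stars-and-bars count for the flats and the last-step-is-$D$ argument for the seam are comparatively routine.
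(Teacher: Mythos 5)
Your proposal follows essentially the same route as the paper: reduce via Lemma \ref{lem:cyclic} to free paths with no cyclic $UF^iU$ factor, decompose according to the up/down structure after stripping the flats (your ``arcs'' between consecutive up steps are the paper's runs, and your $\ell$ pure arcs are exactly its $\ell$ $UU$-adjacencies in $\tilde P$), and finish with stars and bars for the flat steps. Your observation that a genuine path of type $(s+d,-d)$ must end in $D$, so that no forbidden factor straddles the seam, is a detail the paper leaves implicit and you supply correctly. The one claim that is false as stated is the middle step: $\frac{1}{k+d}\binom{k+d}{k-\ell}$ is \emph{not} the number of cyclic compositions of $k+d$ into $k-\ell$ positive parts (it need not even be an integer, e.g.\ $\frac{1}{4}\binom{4}{2}$), and one cannot quotient a single factor of a product by a rotation that acts on the entire configuration. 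The repair is exactly the anchoring you name at the end, and it is what the paper does: count \emph{linear} words beginning with a down step and having no cyclic forbidden factor. Because $\gcd(s,d)=1$ forces all $s+d$ cyclic shifts of such a word to be distinct (the label-vector argument inside Lemma \ref{lem:cyclic}), each necklace contributes exactly $k+d$ of these $D$-anchored words and exactly one genuine path, which produces the single global factor $\frac{1}{k+d}$; the three binomials are then honest linear counts (the anchored word forces the seam arc to be non-pure, which is why the arc-selection factor is $\binom{k-1}{\ell}$ and not $\binom{k}{\ell}$). Note also that what is needed is distinctness of all cyclic shifts of the full word, not aperiodicity of ``the relevant compositions'' --- the latter can fail and is irrelevant once you anchor.
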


\begin{proof}
Let $\mathcal{M}_D(s+d,-d;k,p)$ be the set of free rational Motzkin paths of type $(s+d,-d)$ consisting of $k$ $U$'s, $k+d$ $D$'s, and $s-2k$ $F$'s which starts with a down step and has no cyclic $UF^iU$ steps for all $i=0,1,\dots,p-3$.  
From Lemma \ref{lem:cyclic}, there are $k+d$ cyclic shifts of a rational Motzkin path, which starts with a down step so that the number of rational Motzkin paths of type $(s+d,-d)$ with $k$ up steps and without $UF^iU$ steps for all $i=0,1,\dots,p-3$ is  
\[
\frac{1}{k+d}\,|\mathcal{M}_D(s+d,-d;k,p)|\,.
\]

For a path $P$ belonging to $\mathcal{M}_D(s+d,-d;k,p)$, let $\tilde{P}$ denote the subpath obtained from $P$ by deleting all flat steps. Then, $\tilde{P}=Q_1Q_2\cdots Q_{2k+d}$ is a path consisting of $k$ $U$'s and $k+d$ $D$'s which starts with a down step. Now, we partition $\mathcal{M}_D(s+d,-d;k,p)$ into $k$ sets according to the number of $UU$ steps of $\tilde{P}$. For $0\leq \ell \leq k-1$, let $\mathcal{M}_D^{\ell}(s+d,-d;k,p)$ be the set of $P\in\mathcal{M}_D(s+d,-d;k,p)$ for which $\tilde{P}$ has $\ell$ $UU$ steps so that 
\[
|\mathcal{M}_D(s+d,-d;k,p)|=\sum_{\ell=0}^{k-1}|\mathcal{M}_D^{\ell}(s+d,-d;k,p)|\,.
\]
Hence, it is enough to show that 
\[
|\mathcal{M}_D^{\ell}(s+d,-d;k,p)|=\binom{k+d}{k-\ell}\binom{k-1}{\ell}\binom{s+d-\ell(p-2)-1}{2k+d-1}\,.
\]
We note that if a path $P$ belongs to $\mathcal{M}_D^{\ell}(s+d,-d;k,p)$, then $\tilde{P}=Q_1Q_2\cdots Q_{2k+d}$ is a path of the form 
\[
D^{a_1}U^{b_1}D^{a_2}U^{b_2}\cdots D^{a_{k-\ell}}U^{b_{k-\ell}}D^{a_{k-\ell+1}},
\]
where $a_i$ and $b_i$ are integers satisfying $a_i,b_i\geq 1$ for $i=1,2,\dots,k-\ell$ and $a_{k-\ell+1}\geq0$,
\[
a_1+a_2+\cdots +a_{k-\ell+1}=k+d \qquad \text{and} \qquad b_1+b_2+\cdots+b_{k-\ell}=k\,.
\] 
Since $P$ can be written as
\[
Q_1F^{c_1}Q_2F^{c_2}\cdots Q_{2k+d}F^{c_{2k+d}},  
\]      
where $c_i$'s are nonnegative integers satisfying
$c_1+c_2+\cdots +c_{2k+d}=s-2k$ and $c_i\geq p-2$ if $Q_i=Q_{i+1}=U$, one can see that  
$|\mathcal{M}_D^{\ell}(s+d,-d;k,p)|$ is equal to the number of solution tuples $((a_i),(b_i),(c_i))$.
It is easy to see that the number of solutions $(a_i)$ and $(b_i)$ are  
$\binom{k+d}{k-\ell}$ and $\binom{k-1}{k-\ell-1}=\binom{k-1}{\ell}$, respectively. If $(a_i)$ and $(b_i)$ are given, then they determine $\ell$ indices $i$ such that $c_i\geq p-2$. Hence, the number of solutions $(c_i)$ is equal to the number of nonnegative integer solutions to 
$y_1+y_2+\cdots+y_{2k+d}=s-2k-\ell(p-2)$, that is $\binom{s+d-\ell(p-2)-1}{2k+d-1}$, for $\ell$ satisfying that $s+d-\ell(p-2)-1\geq 2k+d-1$. This completes the proof.
\end{proof}

\begin{rem}
The number of rational Motzkin paths of type $(s+d,-d)$ without up step and $UF^iU$ steps for all $i=0,1,\dots,p-3$ is equal to the number of rational Motzkin paths of type $(s+d,-d)$ without up step, that is $\binom{s+d}{d}/(s+d)$ by Proposition \ref{prop:freemotz}. It follows from Propositions \ref{prop:coremotz} and \ref{prop:mainprop} that we have proven Theorem \ref{thm:main}.
\end{rem}


\section{The $(s,s+1,\dots,s+p)$-core partitions revisited}\label{sec:3}

From Theorem \ref{thm:main}, we obtain a closed formula for the number of $(s,s+1,\dots,s+p)$-core partitions. 

\begin{cor}\label{cor:forone} 
For positive integers $s$ and $p\geq2$, the number of $(s,s+1,\dots,s+p)$-core partitions is equal to the number of Motzkin paths of length $s$ without $UF^iU$ steps for $i=0,1,\dots,p-3$ if $p\geq 3$, that is  
\[
1+\sum_{k=1}^{\lfloor s/2 \rfloor}\sum_{\ell=0}^{r}N(k,\ell+1)\binom{s-\ell(p-2)}{2k},
\]
where $N(k,\ell+1)=\frac{1}{k}\binom{k}{\ell+1}\binom{k}{\ell}=\frac{1}{k+1}\binom{k+1}{\ell+1}\binom{k-1}{\ell}$ is the Narayana number which counts the number of Dyck paths of order 
$k$ having $\ell+1$ peaks and $r=\min(k-1,\,\lfloor (s-2k)/(p-2) \rfloor)$.
\end{cor}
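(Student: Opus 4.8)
The plan is to specialize Theorem~\ref{thm:main} to the case $d=1$, which is legitimate since $\gcd(s,1)=1$ for every positive integer $s$. The path-theoretic half of the statement then comes essentially for free from Proposition~\ref{prop:coremotz}: taking $d=1$ there gives a bijection between $(s,s+1,\dots,s+p)$-core partitions and rational Motzkin paths of type $(s+1,-1)$ avoiding the factors $UF^iU$ for $i=0,1,\dots,p-3$. As observed just before Proposition~\ref{prop:coremotz}, in a rational Motzkin path of type $(s+1,-1)$ the final step is forced to be a down step $D$, and deleting it leaves a genuine Motzkin path of length $s$. Since a terminal $D$ cannot lie inside any factor $UF^iU$, the avoidance condition transfers verbatim to the length-$s$ Motzkin subpath, which establishes the equality with the number of restricted Motzkin paths of length $s$.

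It then remains to simplify the closed formula of Theorem~\ref{thm:main} at $d=1$. First I would treat the leading term: $\frac{1}{s+1}\binom{s+1}{1}=1$, which produces the constant $1$ in the corollary. For the summand, substituting $d=1$ turns $\frac{1}{k+d}\binom{k+d}{k-\ell}\binom{k-1}{\ell}\binom{s+d-\ell(p-2)-1}{2k+d-1}$ into
\[
\frac{1}{k+1}\binom{k+1}{k-\ell}\binom{k-1}{\ell}\binom{s-\ell(p-2)}{2k}.
\]
Using the symmetry $\binom{k+1}{k-\ell}=\binom{k+1}{\ell+1}$, the coefficient multiplying $\binom{s-\ell(p-2)}{2k}$ becomes $\frac{1}{k+1}\binom{k+1}{\ell+1}\binom{k-1}{\ell}$, which is exactly the second stated form of the Narayana number $N(k,\ell+1)$. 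The summation range $r=\min(k-1,\lfloor(s-2k)/(p-2)\rfloor)$ is inherited unchanged from Theorem~\ref{thm:main}.

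Since the corollary is a specialization, there is no real obstacle; the only genuine check, and the step I would single out, is the identity
\[
\frac{1}{k}\binom{k}{\ell+1}\binom{k}{\ell}=\frac{1}{k+1}\binom{k+1}{\ell+1}\binom{k-1}{\ell},
\]
asserting that the two displayed forms of $N(k,\ell+1)$ agree. This is a routine factorial manipulation: using $k!/k=(k-1)!$ on the left and $(k+1)!/(k+1)=k!$ on the right, both sides reduce to $\frac{k!\,(k-1)!}{(\ell+1)!\,\ell!\,(k-\ell)!\,(k-\ell-1)!}$, so the equality holds and identifies the coefficient as the Narayana number counting Dyck paths of order $k$ with $\ell+1$ peaks. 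Assembling the leading term with the simplified sum then completes the proof.
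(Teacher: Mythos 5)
Your proposal is correct and follows exactly the route the paper intends: the corollary is stated as an immediate specialization of Theorem~\ref{thm:main} at $d=1$, with the path statement coming from the remark before Proposition~\ref{prop:coremotz} that a rational Motzkin path of type $(s+1,-1)$ is a Motzkin path of length $s$ followed by a forced terminal $D$. Your algebraic checks (the leading term reducing to $1$, the symmetry $\binom{k+1}{k-\ell}=\binom{k+1}{\ell+1}$, and the equality of the two forms of $N(k,\ell+1)$) are all accurate.
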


Combining Theorem \ref{thm:gendyck} and Corollary \ref{cor:forone}, we see that the $(s,p)$-generalized Dyck paths and the Motzkin paths of length $s$ without $UF^iU$ steps for $i=0,1,\dots,p-3$ if $p\geq 3$
are equinumerous. We now provide a bijection between sets of these paths.  

\subsection{A bijection between the generalized Dyck paths and the rational Motzkin paths}\label{sec:bij}

We first note that for a given $p\geq 2$, if $P$ is a Motzkin path of length $s$ without $UF^iU$ steps for $i=0,1,\dots,p-3$ if $p\geq 3$, then each $U$ step of $P$ is followed by either $F^jD$ for some $j\geq 0$ or $F^{k}U$ for some $k\geq p-2$. Hence, we can decompose $P$ into the following $p+1$ units:
\begin{eqnarray*}
&& \bar{U}_p:=UF^{p-2}\\
&& \bar{D}_p:=D \,\qquad\quad \text{(which is not following $UF^{i}$ for all $i=0,1,\dots,p-3$)}\\
&& \bar{F}_1:=F \qquad ~\,\quad\text{(which is not following $UF^{i}$ for all $i=0,1,\dots,p-3$)}\\
&& \bar{F}_{i}:=UF^{i-2}D \quad \text{for} ~i=2,3,\dots,p-1\,.
\end{eqnarray*}

We now construct a simple bijection $\phi$ between the set of $(s,p)$-generalized Dyck paths and that of Motzkin paths of length $s$ without $UF^iU$ steps for $i=0,1,\dots,p-3$ if $p\geq 3$, for fixed $p\geq2$ as follows.

For a given Motzkin path $P$ of length $s$ without $UF^iU$ steps for $i=0,1,\dots,p-3$ if $p\geq 3$, we define $\phi(P)$ to be the path obtained from $P$ by replacing each unit $\bar{A}$ with $A$ for $A\in\{U_p,D_p,F_i\,|\,i=1,2,\dots,p-1\}$, where $U_p=(0,p)$, $D_p=(p,0)$, $F_i=(i,i)$ for $i=1,2,\dots,p-1$.

We note that if $P$ is decomposed into $k$ $\bar{U}_p$'s, $k$ $\bar{D}_p$'s, and $c_i$ $\bar{F}_i$'s for $i=1,2,\dots,p-1$, then $\phi(P)$ is a path from $(0,0)$ to $(s,s)$ since
\[
k(p-1)+k+\sum_{i=1}^{p-1}i c_i =s=kp+\sum_{i=1}^{p-1}i c_i\,.
\] 
Moreover, $P$ never goes below the $x$-axis if and only if $\phi(P)$ never goes below the line $y=x$. Hence, $\phi(P)$ is an $(s,p)$-generalized Dyck path, and therefore $\phi$ is a bijection between the set of $(s,p)$-generalized Dyck paths and that of Motzkin paths of length $s$ without $UF^iU$ steps for $i=0,1,\dots,p-3$ if $p\geq 3$.

\begin{example} Let $p=4$ and $P=UFFUFFFUDDUFDUFFDD$ so that $P$ is a Motzkin path of length $18$ 
without $UU$ and $UFU$ steps. Hence, $P$ can be written as  $P=\bar{U}_4\bar{U}_4\bar{F}_1\bar{F}_2\bar{D}_4\bar{F}_3\bar{U}_4\bar{D}_4\bar{D}_4$, and therefore $Q=\phi(P)=U_4 U_4 F_1 F_2 D_4 F_3 U_4 D_4 D_4$ which is an $(18,4)$-generalized Dyck path. See Figure \ref{fig:bijection}.

\begin{figure}[ht!]
\begin{center}
\begin{tikzpicture}[scale=.4]
\node[above] at (9,3) {$P$};
\foreach \i in {0,1,2,...,18}
\draw[dotted, gray!60] (\i,0)--(\i,3);

\foreach \i in {1,2,3}
\draw[dotted, gray!60] (0,\i)--(18,\i);

\draw[->, black!70] (0,0)--(19,0);
\draw[->, black!70] (0,0)--(0,4);

\node[below,black!70] at (19,0) {$x$};
\node[left,black!70] at (0,4) {$y$};

\draw[thick] (0,0)--(1,1)--(3,1)--(4,2)--(7,2)--(8,3)--(9,2)--(10,1)
--(11,2)--(12,2)--(13,1)--(14,2)--(16,2)--(17,1)--(18,0);
 
\filldraw[white] (3,1) circle (2pt)
 (6,2) circle (2pt)
 (7,2) circle (2pt)
 (9,2) circle (2pt)
 (10,1) circle (2pt)
 (13,1) circle (2pt)  
 (16,2) circle (2pt)
 (17,1) circle (2pt);
\end{tikzpicture}

\begin{tikzpicture}[scale=.4]
\node[above] at (9,18) {$Q$};
\foreach \i in {0,1,2,...,18}
\draw[dotted, gray!60] (\i,0)--(\i,18);

\foreach \i in {1,2,...,18}
\draw[dotted, gray!60] (0,\i)--(18,\i);

\draw[->, black!70] (0,0)--(19,0);
\draw[->, black!70] (0,0)--(0,19);
\draw[dotted, gray!60] (0,0)--(18,18);

\node[below,black!70] at (19,0) {$x$};
\node[left,black!70] at (0,19) {$y$};

\draw[thick] (0,0)--(0,8)--(3,11)--(7,11)--(10,14)--(10,18)--(18,18);
 
\filldraw[white] (0,4) circle (2pt)
 (0,8) circle (2pt)
 (1,9) circle (2pt)
 (3,11) circle (2pt)
 (7,11) circle (2pt)
 (10,14) circle (2pt)  
 (10,18) circle (2pt)
 (14,18) circle (2pt);
\end{tikzpicture}
\end{center}
\caption{A Motzkin path and the corresponding generalized Dyck path}\label{fig:bijection}
\end{figure}

\end{example}

\subsection{The $(s,s+1,\dots,s+p)$-partitions with $k$ corners}
For a partition $\la$, the number of distinct parts in $\la$ is equal to the number of corners in the Young diagram of $\la$. Many researchers were interested in corners of a partition, and Huang-Wang \cite{HW} found formulae for some simultaneous cores with specified number of corners.

\begin{thm}\cite[Theorems 3.1 and 3.8]{HW}
For positive integers $s$ and $k$, the number of $(s,s+1)$-core partitions with $k$ corners is the Narayana number $N(s,k+1)=\frac{1}{s}\binom{s}{k+1}\binom{s}{k}$, 
and the number of $(s,s+1,s+2)$-core partitions with $k$ corners is $\binom{s}{2k}C_k$, where $C_k$ is the $k$th Catalan number.
\end{thm}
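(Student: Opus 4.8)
The plan is to prove each of the two formulas by transporting the corner statistic through a path bijection and then enumerating the resulting paths refined by a single step statistic. The key observation in both cases is that a corner of $\la$ (a distinct part) is exactly a pair of consecutive labels $x,x+1$ with $x\in\beta(\la)$ and $x+1\notin\beta(\la)$, so I would read corners directly off the bead profile of an abacus.

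For the $(s,s+1,s+2)$-case I would invoke Proposition \ref{prop:coremotz} with $p=2$ and $d=1$, which identifies $(s,s+1,s+2)$-core partitions with Motzkin paths of length $s$ (the rational Motzkin paths of type $(s+1,-1)$ with the forced final down step deleted). The crucial step is a lemma asserting that the number of corners of $\la$ equals the number of up steps of the associated Motzkin path. To prove it I would work on the $(s+1,1)$-abacus: a corner pair $x,x+1$ lies in a single row at adjacent columns $c,c+1$, and using $|f(c+1)-f(c)|\le1$ from Lemma \ref{lem:af}, one checks it occurs precisely when $f(c+1)=f(c)-1$ with $f(c)\ge1$, in which case the corner sits at the topmost bead of column $c$. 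Thus each corner corresponds to a down step landing at height $\ge0$, and conversely every such down step produces exactly one corner. The only down step carrying no corner is the forced final one into $y=-1$, while no down step occurs at the first position because $f(0)=0$ forces $f(1)\in\{0,1\}$. Hence the corners of $\la$ are in bijection with the down steps of the length-$s$ Motzkin path, and since such a path is balanced this number equals the number $k$ of up steps. It then remains to count Motzkin paths of length $s$ with exactly $k$ up steps: choosing the $2k$ positions of the non-flat steps in $\binom{s}{2k}$ ways and filling them with a Dyck word of semilength $k$ in $C_k$ ways yields $\binom{s}{2k}C_k$, as claimed.

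For the $(s,s+1)$-case I would instead use the classical bijection between $(s,s+1)$-core partitions and Dyck paths of semilength $s$ (Anderson's correspondence with $t=s+1$). Here the analogous transport lemma reads: the number of corners of $\la$ is one less than the number of peaks of the corresponding Dyck path. Granting this, a $(s,s+1)$-core with $k$ corners corresponds to a Dyck path of semilength $s$ with exactly $k+1$ peaks, and the number of such paths is the Narayana number $N(s,k+1)=\frac1s\binom{s}{k+1}\binom{s}{k}$, finishing this part. I would verify the peak--corner correspondence by the same consecutive bead-spacer analysis on the abacus, reading off one peak of the Dyck path from each descent of the bead profile together with the initial ascent.

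The main obstacle is the transport lemma itself: precisely matching corners of $\la$ with the step statistic of the path while correctly handling the boundary columns (in the $(s+1,1)$-abacus the columns $0$ and $s+1$ carry the same labels) and the forced final step, so that no corner is lost or double-counted at the edges. Once this local correspondence is pinned down, both enumerations are routine — the Motzkin count is the elementary $\binom{s}{2k}C_k$, and the Dyck count is simply the peak refinement of the Catalan number by Narayana numbers.
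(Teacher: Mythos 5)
First, a caveat: the paper does not prove this statement at all --- it is quoted from Huang--Wang \cite{HW} as a known result --- so there is no in-paper proof to match. The closest internal analogue is the proof of the paper's own generalization in Section 3.2, which transports the corner statistic through the abacus-to-path bijection, and your treatment of the $(s,s+1,s+2)$ half is essentially that argument. A corner of $\la$ is a maximal run of consecutive elements of $\beta(\la)$; the paper detects each run at its bottom (a spacer followed by a bead, hence an up step of the path), while you detect it at its top (a bead followed by a spacer, hence a down step landing at height $\ge 0$). Your local analysis is correct: on the $(s+1,1)$-abacus the pair $x\in\beta(\la)$, $x+1\notin\beta(\la)$ occurs exactly at the topmost bead of a column $c$ with $f(c)\ge 1$ and $f(c+1)=f(c)-1$; every pair of consecutive labels with $x\ge 1$ is represented by adjacent columns $c,c+1$ with $0\le c\le s$; and the only down step not of this form is the forced terminal one into height $-1$. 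Balance of the residual length-$s$ Motzkin path converts your down-step count into the paper's up-step count, and $\binom{s}{2k}C_k$ is the standard enumeration of Motzkin paths of length $s$ with $k$ up steps. This half is sound.

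The $(s,s+1)$ half, however, has a genuine gap. Everything there rests on the transport lemma ``number of corners of $\la$ equals number of peaks of the associated Dyck path minus one,'' which you assert, correctly flag as the main obstacle, and then do not prove. The one-sentence sketch (``one peak from each descent of the bead profile together with the initial ascent'') does not specify which path encoding of $\beta(\la)$ is being used, why the correspondence is local, or where the offset of one comes from; yet this lemma is the entire content of that half of the theorem, since once it is granted the count $N(s,k+1)=\frac{1}{s}\binom{s}{k+1}\binom{s}{k}$ is just the Narayana refinement of $C_s$ by peaks. Note also that you cannot borrow the paper's machinery here: Lemma \ref{lem:af} and Proposition \ref{prop:coremotz} are set up only for $p\ge 2$, i.e.\ for at least three cores in arithmetic progression, so the two-core case requires a separate construction (e.g.\ encoding $\beta(\la)$ as a north/east word on an initial segment of the integers and verifying that peaks correspond to run boundaries, with one boundary peak accounting for the $-1$). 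Until that verification is written down, the first formula is unproved.
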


Huang-Wang also suggested an open problem for enumerating $(s,s+1,\dots,s+p)$-cores with $k$ corners, and we give an answer for this problem.

\begin{thm}
For positive integers $s$, $p\geq2$, and $1\leq k\leq \lfloor s/2 \rfloor$, the number of $(s,s+1,\dots,s+p)$-core partitions with $k$ corners is \[
\sum_{\ell=0}^{r}N(k,\ell+1)\binom{s-\ell(p-2)}{2k}\,,
\]
where $r=\min(k-1,\,\lfloor (s-2k)/(p-2) \rfloor)$.
\end{thm}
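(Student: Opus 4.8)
The plan is to deduce this theorem from a corners-refinement of the bijection in Proposition~\ref{prop:coremotz} together with the enumeration in Proposition~\ref{prop:mainprop}, both specialized to $d=1$. Concretely, I would first show that under the bijection between $(s,s+1,\dots,s+p)$-core partitions and rational Motzkin paths of type $(s+1,-1)$ without $UF^iU$ steps (equivalently, Motzkin paths of length $s$ with the same restriction, by the observation following Definition~\ref{def:rational}), the number of corners of $\la$ equals the number of up steps of the corresponding path. Granting this, Proposition~\ref{prop:mainprop} with $d=1$ counts the paths with exactly $k$ up steps as $\frac{1}{k+1}\sum_{\ell=0}^{r}\binom{k+1}{k-\ell}\binom{k-1}{\ell}\binom{s-\ell(p-2)}{2k}$, and rewriting $\frac{1}{k+1}\binom{k+1}{k-\ell}=\frac{1}{k+1}\binom{k+1}{\ell+1}$ turns the $\ell$-th summand into $N(k,\ell+1)\binom{s-\ell(p-2)}{2k}$, exactly the claimed expression. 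This is the same passage already used to prove Corollary~\ref{cor:forone}, so the only genuinely new ingredient is the corners-to-up-steps correspondence.

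To establish that correspondence I would argue directly on the $(s+1,1)$-abacus of $\la$. Since $\la$ is an $(s+1)$-core, Lemma~\ref{lem:core} makes the beads in each column down-closed, so the spacer selected by $f$ in column $j$ sits immediately above all beads of that column; hence for $0\le j\le s$ the value $f(j)$ equals the number $c_j$ of elements of $\beta(\la)$ in the residue class $j$ modulo $s+1$, and when $c_j\ge 1$ the top bead of column $j$ has label $x_j=(s+1)(c_j-1)+j$. On the partition side, the number of corners of $\la$ equals the number of distinct parts, which is exactly the number of \emph{run-bottoms} of $\beta(\la)$, that is, the number of $x\in\beta(\la)$ with $x-1\notin\beta(\la)$. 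The plan is then to match run-bottoms with up steps.

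The crux is a two-way matching. First, reading the position labelled $x_j-1=(s+1)(c_j-1)+(j-1)$ as row $c_j-1$ of column $j-1$, one sees that $x_j-1\in\beta(\la)$ precisely when $c_{j-1}\ge c_j$; since consecutive values of $f$ differ by at most one (Lemma~\ref{lem:af}(b)), the step into column $j$ is an up step if and only if $c_j>c_{j-1}$, i.e.\ if and only if the top bead $x_j$ of column $j$ is a run-bottom. Second, I must check that every run-bottom arises as such a top bead, and here the hypothesis $p\ge 2$ enters: if $x$ is a run-bottom with $x+(s+1)\in\beta(\la)$, then the $(s+2)$-core condition of Lemma~\ref{lem:core} forces $x+(s+1)-(s+2)=x-1\in\beta(\la)$, contradicting $x-1\notin\beta(\la)$; hence $x+(s+1)\notin\beta(\la)$, so $x$ is the top bead of its own column. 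Combining the two observations, sending a run-bottom $x$ to the up step entering column $x\bmod(s+1)$ gives a bijection between run-bottoms of $\beta(\la)$ and up steps of the path, so the number of corners equals the number of up steps, completing the reduction.

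I expect the main obstacle to be exactly this matching. The equivalence ``up step $\Leftrightarrow$ top bead is a run-bottom'' is a clean bookkeeping computation on the abacus, but surjectivity onto \emph{all} run-bottoms—the statement that a run-bottom can never be covered by a higher bead in its column—is where the simultaneous-core hypotheses beyond the single $(s+1)$-core are indispensable; it is the step that both forces $p\ge 2$ and, through the requirement of $k$ up steps and $k$ matching down steps among $s$ steps, pins the range $1\le k\le\lfloor s/2\rfloor$.
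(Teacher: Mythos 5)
Your proposal is correct and follows essentially the same route as the paper: identify corners of $\la$ with run-bottoms of $\beta(\la)$, match these with the up steps of the associated Motzkin path via the abacus, and then invoke Proposition~\ref{prop:mainprop} with $d=1$ together with the identity $\frac{1}{k+1}\binom{k+1}{\ell+1}\binom{k-1}{\ell}=N(k,\ell+1)$. Your write-up is in fact more careful than the paper's one-paragraph argument, since you explicitly verify the surjectivity of run-bottoms onto top beads (the point where the $(s+2)$-core hypothesis, i.e.\ $p\geq 2$, is genuinely needed), which the paper leaves implicit.
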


\begin{proof}
For an $(s,s+1,\dots,s+p)$-core partition $\la$, we consider the $(s+1)$-abacus of $\la$. We note that if $\la_i=\la_{i+1}$, then $h(i,1)=h(i+1,1)+1$ so that we have consecutive beads in its $(s+1)$-abacus. Therefore, the number of corners in $\la$ is matching with the number of runs of consecutive beads in its $(s+1)$-abacus, being equal to the number of spacers that is followed by a bead. In the corresponding Motzkin path, each up step appears when we have a spacer that is followed by a bead. Hence, the number of corners in $\la$ is equal to the number of up steps in the corresponding Motzkin path. By letting $d=1$ in Proposition \ref{prop:mainprop} and Corollary \ref{cor:forone}, this completes the proof. 
\end{proof}

\begin{example}
Let $\la=(9,2,2,2,1)$ so that $\la$ is a $(7,8,9)$-core partition and $\beta(\la)=\{13,6,5,4,1\}$. Figure \ref{fig:corner} shows the $8$-abacus of $\la$ and its corresponding Motzkin path.  We can easily check that the number of corners in $\la$ and the number of up steps in its corresponding Motzkin path is three.
\end{example}

\begin{figure}[ht!]
\centering
\begin{tikzpicture}[scale=.44]
\foreach \i in {0,1,2,3,4,5,6,7}
\node at (2*\i,0) {$\i$};
\foreach \i in {8,9,10,11,12,13,14,15}
\node at (2*\i-16,1) {$\i$};
\foreach \i in {16,17,18,19,20,21,22,23}
\node at (2*\i-32,2) {$\i$};
\foreach \i in {24,25,26,27,28,29,30,31}
\node at (2*\i-48,3) {$\i$};
\node at (7,4.1) {\vdots};
\draw (2,0) circle (12pt);
\draw (8,0) circle (12pt);
\draw (10,0) circle (12pt);
\draw (12,0) circle (12pt);
\draw (10,1) circle (12pt);
\draw (0,0)--(2,1)--(4,0)--(6,0)--(8,1)--(10,2)--(12,1)--(14,0);
\end{tikzpicture}
\qquad \quad
\begin{tikzpicture}[scale=.44]
\foreach \i in {0,1,2,3,4,5,6,7}
\draw[dotted, gray!60] (\i,0)--(\i,3);

\foreach \i in {0,1,2,3}
\draw[dotted, gray!60] (0,\i)--(7,\i);

\draw[->, black!70] (0,0)--(8,0);
\draw[->, black!70] (0,0)--(0,3);

\node[right, black!70] at (8,0) {$x$};
\node[left, black!70] at (0,3) {$y$};

\draw[thick] (0,0)--(1,1)--(2,0)--(3,0)--(4,1)--(5,2)--(6,1)--(7,0);
\end{tikzpicture}
\caption{The $8$-abacus of the $(7,8,9)$-core partition $(9,2,2,2,1)$ and the corresponding Motzkin path}\label{fig:corner}.
\end{figure}

\subsection{Self-conjugate $(s,s+1,\dots,s+p)$-core partitions}

A partition whose conjugate is equal to itself is called \emph{self-conjugate}. From now on, we focus on self-conjugate partitions. Ford-Mai-Sze \cite{FMS} found the number of self-conjugate $(s,t)$-core partitions.

\begin{thm}\cite[Theorem 1]{FMS} \label{thm:FMS}
For relatively prime integers $s$ and $t$, the number of self-conjugate $(s,t)$-core partitions is 
\[
\binom{\lfloor \frac{s}{2} \rfloor + \lfloor \frac{t}{2} \rfloor}{\lfloor \frac{s}{2} \rfloor}\,.
\]
In particular, the number of self-conjugate $(s,s+1)$-core partitions is equal to the number of symmetric Dyck paths of order $s$, that is $\binom{s}{\lfloor s/2 \rfloor}$.
\end{thm}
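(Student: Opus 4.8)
The plan is to encode partitions by bead words, read off the two symmetries that the $(s,t)$-core condition and self-conjugacy impose, and then ``fold'' a self-conjugate core onto a free lattice path in a rectangle of half the size. Encode $\la$ by its bi-infinite bead word $w=(w_p)_{p\in\mathbb{Z}}\in\{0,1\}^{\mathbb{Z}}$, placing a bead ($w_p=1$) at each $p$ of the form $\la_i-i$; this refines the beta-set of Section~\ref{sec:2}, and Lemma~\ref{lem:core} translates into the statement that $\la$ is an $s$-core if and only if $w_p=1$ forces $w_{p-s}=1$, and likewise for $t$. Conjugation reflects the Young diagram across its main diagonal, which on bead words is the reverse-and-complement involution $w_p\mapsto 1-w_{-1-p}$; thus $\la$ is self-conjugate precisely when
\[
w_p+w_{-1-p}=1 \qquad \text{for all } p\in\mathbb{Z}.
\]
In particular a self-conjugate $\la$ is completely determined by the half-word $(w_p)_{p\ge 0}$.

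Next I would combine the three conditions. For a self-conjugate $(s,t)$-core, the beads propagate downward by both $s$ and $t$, hence (since $\gcd(s,t)=1$) by every sufficiently large integer, so only finitely many entries of $(w_p)_{p\ge 0}$ are undetermined: those indexed by the gaps of the numerical semigroup $\langle s,t\rangle$, of which there are $(s-1)(t-1)/2$ by Sylvester. This is exactly Anderson's identification \cite{Anderson} of $(s,t)$-cores with order ideals of the poset of these gaps, and under that identification the symmetry $w_p+w_{-1-p}=1$ becomes the condition that the order ideal be invariant under the anti-automorphism $g\mapsto (st-s-t)-g$ of the gap poset. So the task is reduced to counting the \emph{self-complementary} order ideals of this poset.

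The final step is to ``fold'' such a symmetric ideal across the fixed locus of the anti-automorphism and read the result as a monotone lattice path. I claim that folding gives a bijection between self-conjugate $(s,t)$-cores and \emph{all} monotone lattice paths (unit north and east steps) from $(0,0)$ to $(\lfloor s/2\rfloor,\lfloor t/2\rfloor)$, the defining above-diagonal constraint of the unfolded path becoming vacuous after folding. Granting this, the count is simply the number of such paths,
\[
\binom{\lfloor s/2\rfloor+\lfloor t/2\rfloor}{\lfloor s/2\rfloor},
\]
as asserted. Specializing to $t=s+1$ gives $\lfloor s/2\rfloor+\lfloor (s+1)/2\rfloor=s$ and hence $\binom{s}{\lfloor s/2\rfloor}$, and unfolding recovers exactly the symmetric Dyck paths of order $s$.

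The main obstacle is the folding bijection of the third paragraph: one must describe precisely how the self-complementary ideal is cut along the fixed locus of $g\mapsto(st-s-t)-g$, show that the two halves determine each other, and verify that the residual half carries no constraint beyond being an arbitrary monotone path in the $\lfloor s/2\rfloor\times\lfloor t/2\rfloor$ rectangle. The delicate points are all driven by parity: since $s$ and $t$ are coprime at most one of them is even, and whether a gap or a step lies exactly on the symmetry axis (equivalently, whether the central step of the unfolded path is fixed) depends on the parities of $s$ and $t$, which is what produces the two floor functions and the non-square rectangle. Once the fold is shown to be a well-defined, constraint-free bijection, the enumeration is the elementary path count above.
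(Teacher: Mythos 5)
This statement is quoted from Ford--Mai--Sze (\cite[Theorem 1]{FMS}); the paper does not prove it, so there is no in-paper argument to compare against. Judged on its own terms, your proposal is a reasonable roadmap in the spirit of the known proof (beta-set/abacus symmetry, then a lattice path in a half-size rectangle), but it contains a genuine gap: the entire content of the theorem is concentrated in the ``folding bijection'' of your third paragraph, and you do not prove it --- you state it as a claim and then, in your final paragraph, identify it as ``the main obstacle.'' In particular, the assertion that ``the defining above-diagonal constraint of the unfolded path becomes vacuous after folding,'' i.e.\ that the folded half is an \emph{arbitrary} monotone path in a $\lfloor s/2\rfloor\times\lfloor t/2\rfloor$ rectangle, is exactly what needs to be established; the parity analysis along the fixed locus of $g\mapsto (st-s-t)-g$ that you defer is where all the work lies. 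A proof that reduces the theorem to an unproven claim equivalent to the theorem is not a proof.

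There is also a smaller but real slip in the second paragraph. With your charge-zero convention (beads at $\lambda_i-i$), the undetermined nonnegative positions are \emph{not} the gaps of $\langle s,t\rangle$: for $\lambda=(1)$ one has a bead at $0\in\langle s,t\rangle$. Anderson's identification with order ideals of the gap poset uses the first-column hook lengths $\beta(\lambda)$ (as in Lemma \ref{lem:core} of this paper), which differ from your encoding by a shift equal to the number of parts, a quantity that varies with $\lambda$. This is repairable (and for self-conjugate words your symmetry $w_p+w_{-1-p}=1$ does handle the charge condition automatically), but as written the reduction to self-complementary ideals of the gap poset does not follow from the stated encoding. To complete the argument you would need to fix the normalization and then actually construct and verify the fold; alternatively, cite \cite{FMS} as the paper does.
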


Motivated by Theorem \ref{thm:FMS}, in the previous work \cite{CHS}, authors showed that the number of self-conjugate $(s,s+1,s+2)$-core partitions is equal to the number of symmetric Motzkin paths of length $s$, and then gave a conjecture for the number of self-conjugate $(s,s+1,\dots,s+p)$-cores. Recently, this was proved by Yan-Yu-Zhou.

\begin{thm}\cite[Theorems 2.14, 2.19, and 2.22]{YYZ} \label{prop:YYZ}
For positive integers $s$ and $p$, the number of self-conjugate $(s,s+1,\dots,s+p)$-core partitions is equal to the number of symmetric $(s,p)$-generalized Dyck paths.
\end{thm}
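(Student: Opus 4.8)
The plan is to restrict the composite bijection already built in this paper to self-conjugate partitions. For a positive integer $p$ and $d=1$, Proposition \ref{prop:coremotz} identifies each $(s,s+1,\dots,s+p)$-core $\la$ with a Motzkin path $P_\la$ of length $s$ avoiding $UF^iU$ for $i=0,1,\dots,p-3$, and Section \ref{sec:bij} converts such paths bijectively, via $\phi$, into $(s,p)$-generalized Dyck paths. I would introduce two reflections: the involution $\rho$ on Motzkin paths that reverses the sequence of steps and interchanges $U\leftrightarrow D$ while fixing $F$, and the involution $\tau$ on generalized Dyck paths that reverses the steps, interchanges $U_p\leftrightarrow D_p$, and fixes each $F_i$. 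Since $\tau$ is precisely reflection of a path from $(0,0)$ to $(s,s)$ across the anti-diagonal $x+y=s$, a $\tau$-fixed generalized Dyck path is exactly a symmetric $(s,p)$-generalized Dyck path. The goal is then to prove that $\la$ is self-conjugate if and only if $P_\la$ is $\rho$-fixed, and that $\phi$ carries $\rho$-fixed paths bijectively onto $\tau$-fixed ones. The case $p=1$ is Theorem \ref{thm:FMS} (symmetric ordinary Dyck paths), so I would assume $p\geq 2$.

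First I would record the abacus description of conjugation. Writing $\beta(\la)\subseteq\{0,1,\dots,\la_1+\ell-1\}$, where $\ell$ is the number of parts, one checks the classical fact that $\la$ is self-conjugate if and only if $\beta(\la)$ is invariant under the complemented reflection $x\mapsto(\la_1+\ell-1-x)$ that exchanges beads and spacers. On the $(s+1,1)$-abacus this reflection reverses the order of the columns and swaps the bead/spacer pattern, and because $P_\la$ is obtained by tracking the lowest spacer in each column (Lemma \ref{lem:af} with $d=1$), this is exactly the operation that reverses $P_\la$ and turns each up step into a down step. Thus I would show directly that $P_{\la'}=\rho(P_\la)$, so that $\la$ is self-conjugate precisely when $\rho(P_\la)=P_\la$. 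A useful by-product to note here is that a $\rho$-fixed path automatically avoids $DF^iD$ as well as $UF^iU$, since $\rho$ sends an occurrence of one pattern to an occurrence of the other.

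It then remains to show that $\phi$ intertwines the two reflections, $\phi\circ\rho=\tau\circ\phi$, on this symmetric locus. I would verify the identity at the level of the unit decomposition of Section \ref{sec:bij}: under $\rho$ the units transform by $\bar U_p=UF^{p-2}\mapsto F^{p-2}D$, $\bar D_p=D\mapsto U$, $\bar F_1\mapsto\bar F_1$, and $\bar F_i=UF^{i-2}D\mapsto\bar F_i$, while $\tau$ acts on $\phi(P)$ by reversing the units and exchanging $U_p\leftrightarrow D_p$ with each $F_i$ fixed. The content of the identity is that, after reversal, the block $F^{p-2}$ released by each $\bar U_p$ is absorbed by the $U$ produced from the neighboring $\bar D_p$, so that re-parsing $\rho(P)$ reproduces exactly the unit sequence of $\tau(\phi(P))$; small instances such as $UFDF\mapsto FUFD$ confirm this re-association. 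Granting the identity, $\phi$ restricts to a bijection between $\rho$-fixed Motzkin paths and $\tau$-fixed generalized Dyck paths, and combining this with the previous paragraph yields the theorem.

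The main obstacle I anticipate is precisely this intertwining. The asymmetry of the units, in which $\bar U_p$ swallows $p-2$ flats to its right while $\bar D_p$ is a bare step, means that $\rho$ does not act unit-by-unit, and one must argue that reversal re-associates the interior flats correctly. Moreover $\rho$ does not preserve the class of allowed Motzkin paths in general, since it can manufacture a forbidden $UF^iU$ out of a $DF^iD$; consequently both the intertwining identity and the transfer of fixed points must be argued on the symmetric subfamily, where the two forbidden patterns are simultaneously absent. Once this flat-absorption bookkeeping is settled, surjectivity onto the symmetric generalized Dyck paths follows formally from $\phi$ being a bijection that commutes with the involutions.
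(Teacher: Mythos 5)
Your plan cannot succeed as written, because the composite bijection of this paper simply does not carry self-conjugate partitions to symmetric paths; Theorem~\ref{prop:YYZ} is an equinumerosity statement, not the fixed-point statement of the involutions you set up. A minimal counterexample occurs already at $s=3$, $p=2$, where the pattern condition is vacuous and $\phi$ is the trivial relabelling $U\mapsto U_2$, $D\mapsto D_2$, $F\mapsto F_1$. The self-conjugate core $\la=(1)$ has $\beta(\la)=\{1\}$, hence $f=(0,1,0,0,-1)$ on the $(4,1)$-abacus and $P_\la=UDF$, which is not $\rho$-fixed and maps to the non-symmetric generalized Dyck path $U_2D_2F_1$; meanwhile the non-self-conjugate core $(1,1)$ has $\beta=\{2,1\}$, $f=(0,1,1,0,-1)$, and $P=UFD$, which \emph{is} symmetric. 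In particular $P_{\la'}\neq\rho(P_\la)$ in general: $\rho(P_{(2)})=UDF$ while $P_{(2)'}=P_{(1,1)}=UFD$. The root cause is in your abacus step. Conjugation acts on $\beta(\la)$ by complementation composed with $x\mapsto h-x$, where $h=\la_1+\ell-1$ is the largest hook length; on the columns of the $(s+1,1)$-abacus this induces $j\mapsto h-j\pmod{s+1}$, a reflection whose center depends on $h\bmod (s+1)$ and which is \emph{not} the column reversal $j\mapsto -j$. The induced operation on the Motzkin path is therefore $\rho$ composed with a partition-dependent cyclic rotation (essentially the normalization of Lemma~\ref{lem:cyclic}), and this rotation destroys the pointwise symmetry your argument requires.

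Your second key claim, $\phi\circ\rho=\tau\circ\phi$, fails independently: for $p=3$ the symmetric generalized Dyck path $Q=U_3U_3D_3D_3$ pulls back to $\phi^{-1}(Q)=UFUFDD$, whose reflection is $UUFDFD\neq UFUFDD$ (and even contains the forbidden $UU$). The paper is explicitly aware of this obstruction: in the proof of Theorem~\ref{thm:selfone} it notes that ``$\phi^{-1}(Q)$ may not be symmetric even though $Q$ is symmetric,'' which is precisely why it does not prove Theorem~\ref{prop:YYZ} along your lines but instead cites it from Yan--Yu--Zhou \cite{YYZ} and then performs a separate count that re-symmetrizes $\phi^{-1}(Q)$ by deleting flat steps. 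To repair your approach you would have to identify the correct (rotation-twisted) involution on the abacus side and match its fixed points with symmetric generalized Dyck paths by some non-bijective or differently-bijective argument, or follow \cite{YYZ} directly; restricting $\phi$ and the plain reflections $\rho$, $\tau$ does not work.
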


In this subsection, we give a closed formula for the number of self-conjugate $(s,s+1,\dots,s+p)$-core partitions. Here, we give a useful lemma from the OEIS.

\begin{lem}\cite[Sequence A088855]{OEIS} \label{lem:symdyck}
For nonnegative integers $k$ and $\ell$ such that $\ell<k$, the number of symmetric Dyck paths of order $k$ having $\ell$ $UU$ steps is 
\[
\binom{\lfloor \frac{k-1}{2} \rfloor}{\lfloor \frac{\ell}{2} \rfloor} \binom{\lfloor \frac{k}{2} \rfloor}{\lfloor \frac{\ell+1}{2} \rfloor}\,.
\]
\end{lem}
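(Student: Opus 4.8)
The plan is to prove this purely combinatorial identity by folding a symmetric Dyck path onto its \emph{first half}, translating the $UU$-statistic into a run-count, and then evaluating a constrained composition sum by a ballot-type argument. Write a Dyck path of order $k$ as a word $s_1 s_2 \cdots s_{2k}$ in $\{U,D\}$. Reflection in the vertical line $x=k$ preserves heights and carries local maxima to local maxima, so the path is symmetric precisely when $s_{2k+1-i}=\bar{s_i}$ for all $i$, where $\bar{U}=D$ and $\bar{D}=U$. Hence the path is determined by its first half $w=s_1\cdots s_k$, and since its height function satisfies $\mathrm{ht}(2k-x)=\mathrm{ht}(x)$, the full path stays weakly above the $x$-axis if and only if $w$ does. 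So restriction to the first half is a bijection between symmetric Dyck paths of order $k$ and \emph{nonnegative prefix paths} (left factors of Dyck paths) of length $k$; every such $w$ of positive length begins with $U$.

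Next I would translate the statistic. The only adjacency of the full path straddling the centre is $(s_k,s_{k+1})=(s_k,\bar{s_k})$, which is never $UU$; every other $UU$ of the full path lies entirely in one half, and by the mirror symmetry a $UU$ in the second half corresponds to a $DD$ in the first half. Therefore the number of $UU$ steps of the symmetric path equals $\#UU(w)+\#DD(w)$, that is, the number of indices $i$ with $w_i=w_{i+1}$. Since a word of length $k$ with $R$ maximal runs has exactly $k-R$ such indices, the task reduces to counting nonnegative prefix paths of length $k$ having exactly $R=k-\ell$ runs.

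Finally I would carry out this count. As $w$ starts with $U$, its runs alternate $U^{a_1}D^{b_1}U^{a_2}D^{b_2}\cdots$, and whether $w$ ends in $U$ or $D$ is forced by the parity of $R=k-\ell$. With the run lengths $a_i,b_i\ge1$ subject to $\sum a_i+\sum b_i=k$ and the ballot condition $a_1+\cdots+a_i\ge b_1+\cdots+b_i$ at the end of each $D$-run (the only places where nonnegativity is binding), the problem becomes one of counting nonnegative integer compositions with a prefix-dominance constraint. After the substitutions $a_i=1+\alpha_i$, $b_i=1+\beta_i$ the total excess $\sum(\alpha_i+\beta_i)$ equals $\ell$, and I would evaluate the constrained count by the reflection principle / cycle lemma, or equivalently by reading the two partial-sum sequences as two non-crossing lattice paths and invoking Lindstr\"{o}m--Gessel--Viennot. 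Separating the four parity cases of $(k,\ell)$ is what produces the two floor functions, and the count should collapse to $\binom{\lfloor(k-1)/2\rfloor}{\lfloor\ell/2\rfloor}\binom{\lfloor k/2\rfloor}{\lfloor(\ell+1)/2\rfloor}$.

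The main obstacle is exactly this last evaluation: disentangling the ballot (nonnegativity) constraint from the run-length compositions and pushing the parity bookkeeping through so that the floors emerge cleanly, rather than as a messier determinant or alternating sum. As consistency checks I would verify the extreme cases $\ell=0$ and $\ell=k-1$ by hand and confirm the summation identity $\sum_{\ell}\binom{\lfloor(k-1)/2\rfloor}{\lfloor\ell/2\rfloor}\binom{\lfloor k/2\rfloor}{\lfloor(\ell+1)/2\rfloor}=\binom{k}{\lfloor k/2\rfloor}$, which must hold because $\binom{k}{\lfloor k/2\rfloor}$ is the total number of symmetric Dyck paths of order $k$ by Theorem \ref{thm:FMS}.
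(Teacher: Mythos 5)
The paper offers no proof of this lemma at all --- it is quoted from OEIS A088855 --- so there is nothing of the authors' to compare your argument to; you are attempting strictly more than they do. Your reduction is correct and cleanly stated: folding onto the first half is a bijection with nonnegative prefix paths $w$ of length $k$; the central adjacency $(s_k,s_{k+1})$ is never $UU$; mirror symmetry turns second-half $UU$'s into first-half $DD$'s, so the statistic becomes $k$ minus the number of maximal runs of $w$; and the binding nonnegativity constraints are exactly the partial-sum inequalities at the ends of the $D$-runs. All of this checks out against the stated formula for small $k$.

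The gap is that the step carrying the entire content of the lemma --- showing that the number of nonnegative prefix paths of length $k$ with exactly $k-\ell$ runs equals $\binom{\lfloor (k-1)/2\rfloor}{\lfloor \ell/2\rfloor}\binom{\lfloor k/2\rfloor}{\lfloor (\ell+1)/2\rfloor}$ --- is only gestured at. You name three possible tools (reflection, cycle lemma, Lindstr\"{o}m--Gessel--Viennot) without committing to one, and you yourself identify the parity bookkeeping as ``the main obstacle.'' This enumeration is genuinely not routine: unlike the standard ballot problem, the terminal height of the half-path is not fixed (only the total length $k$ is), so the dominance condition $a_1+\cdots+a_i\ge b_1+\cdots+b_i$ couples two compositions whose individual sums both vary; neither reflection nor LGV applies off the shelf, and either route leaves you with a sum over the free endpoint that must be shown to collapse to a single product of binomials separately in each of the four parity classes of $(k,\ell)$. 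Until that computation, or a bijection replacing it, is actually written out, what you have is a correct reduction plus an unproven identity rather than a proof. Your consistency checks ($\ell=0$, $\ell=k-1$, and the row sum $\binom{k}{\lfloor k/2\rfloor}$ via Vandermonde) are good evidence but do not close it.
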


\begin{thm} \label{thm:selfone}
For positive integers $s$ and $p\geq 2$, the number of self-conjugate $(s,s+1,\dots,s+p)$-core partitions is
\[
1+\sum_{k=1}^{\lfloor s/2 \rfloor}\sum_{\ell=0}^{r}\binom{\lfloor \frac{k-1}{2} \rfloor}{\lfloor \frac{\ell}{2} \rfloor} \binom{\lfloor \frac{k}{2} \rfloor}{\lfloor \frac{\ell+1}{2} \rfloor}\binom{\lfloor \frac{s-\ell(p-2)}{2} \rfloor}{k}\,,
\]
where $r=\min(k-1,\,\lfloor (s-2k)/(p-2) \rfloor)$.
\end{thm}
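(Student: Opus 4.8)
The plan is to obtain the self-conjugate count as a \emph{symmetric} refinement of Corollary~\ref{cor:forone}, replacing each ingredient of that count by its symmetric analogue. The cleanest starting point is that conjugation preserves hook lengths, hence acts as an involution on $(s,s+1,\dots,s+p)$-core partitions, and therefore (through the bijection behind Corollary~\ref{cor:forone}) as an involution on the corresponding Motzkin paths of length $s$ without $UF^iU$ steps; the self-conjugate cores are exactly its fixed points. On the abacus side this involution is the antisymmetry of the beta-set about its centre $r-\tfrac12$ (with $r$ the number of parts) that characterises self-conjugacy, and alternatively, via Theorem~\ref{prop:YYZ} and the bijection $\phi$ of Section~\ref{sec:bij}, a self-conjugate core is carried to a symmetric $(s,p)$-generalized Dyck path (one invariant under reflection in $y=x$, which reverses the steps and interchanges the vertical steps $U_p$ with the horizontal steps $D_p$ while fixing each $F_i$). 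Either description reduces the problem to counting fixed points.

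The counting would then follow the template of Proposition~\ref{prop:mainprop} with $d=1$, organised by the number $k$ of up steps (equivalently, the number of corners) and the number $\ell$ of $UU$ steps of the reduced Dyck path $\tilde P$ obtained by deleting all flats. The first structural claim is that for a fixed point the shape $\tilde P$ is a \emph{symmetric} Dyck path of order $k$ in the sense of Lemma~\ref{lem:symdyck} (invariance under reversal together with $U\leftrightarrow D$); hence the number of admissible shapes with $\ell$ $UU$ steps is the symmetric analogue of the Narayana number $N(k,\ell+1)$, namely $\binom{\lfloor (k-1)/2\rfloor}{\lfloor \ell/2\rfloor}\binom{\lfloor k/2\rfloor}{\lfloor (\ell+1)/2\rfloor}$. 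For the flats I would prove that the involution induced by conjugation on the $2k+1$ gaps of $\tilde P$ has exactly one fixed gap and $k$ transposed pairs, and that it sends each $UU$-gap to a $UU$-gap. A self-conjugate filling is then determined by the flats on orbit representatives: because $UU$-gaps are paired with $UU$-gaps (or, in odd cases, one $UU$-gap is the fixed gap), reserving $p-2$ flats in each of the $\ell$ $UU$-gaps costs $\ell(p-2)$ in total rather than $2\ell(p-2)$, and the remaining $s-2k-\ell(p-2)$ flats are distributed over one fixed gap and $k$ pairs. A stars-and-bars count then gives
\[
\binom{\lfloor (s-2k-\ell(p-2))/2\rfloor+k}{k}=\binom{\lfloor (s-\ell(p-2))/2\rfloor}{k}.
\]
Multiplying the two factors, summing over $1\le k\le \lfloor s/2\rfloor$ and $0\le \ell\le r=\min(k-1,\lfloor (s-2k)/(p-2)\rfloor)$, and adding $1$ for the unique all-flat path at $k=0$, would yield the stated formula.

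The hard part will be establishing the precise cycle structure of the conjugation-induced involution on the gaps, and in particular that each $UU$-gap is mapped to a $UU$-gap with exactly one fixed gap overall. This is what distinguishes the present count from the naive ``mirror-symmetric'' guess (under which $UU$-gaps pair with $DD$-gaps): it is exactly why the reservation enters as $\ell(p-2)$ inside the floor, producing $\lfloor (s-\ell(p-2))/2\rfloor$ rather than $\lfloor s/2\rfloor-\ell(p-2)$, and it is responsible for the interplay of the floor functions coming from the flats with those supplied by Lemma~\ref{lem:symdyck}. I would verify the gap structure by analysing the antisymmetry of the beta-set column-by-column on the $(s+1)$-abacus, tracking how the spacer-to-bead reflection about $r-\tfrac12$ acts on the function $f$ of Lemma~\ref{lem:af}; the parity of $\ell$, which decides whether the central gap is itself a $UU$-gap, is the delicate point and is the ultimate source of the floor functions in the final expression.
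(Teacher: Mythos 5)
Your counting template---classify by $k$ and by the number $\ell$ of $UU$ steps of the flat-deleted path $\tilde P$, multiply the symmetric-Dyck-path count of Lemma~\ref{lem:symdyck} by a stars-and-bars factor, and observe that reserving $p-2$ flats in each $UU$-gap costs $\ell(p-2)$ in total---does reproduce the stated formula, and your identity $\binom{\lfloor (s-2k-\ell(p-2))/2\rfloor+k}{k}=\binom{\lfloor (s-\ell(p-2))/2\rfloor}{k}$ agrees with the paper's computation. But the structural input you rest this on is not established, and one piece of your setup is false. Theorem~\ref{prop:YYZ} asserts only that self-conjugate cores and symmetric $(s,p)$-generalized Dyck paths are \emph{equinumerous}; it does not say that the composite of the core-to-path bijection of Proposition~\ref{prop:coremotz} with $\phi$ carries self-conjugate cores to symmetric paths, and in fact it does not. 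For $s=4$, $p=2$ the self-conjugate core $(1)$ has $\beta=\{1\}$, Motzkin path $UDFF$, and generalized Dyck path $U_2D_2F_1F_1$ through $(0,0),(0,2),(2,2),(3,3),(4,4)$; none of these is symmetric. The five self-conjugate $(4,5,6)$-cores yield the Motzkin paths $FFFF$, $UDFF$, $UDUD$, $FUFD$, $UUDD$, which is \emph{not} the set of symmetric Motzkin paths of length $4$ (that set contains $FUDF$ and $UFFD$ instead of $UDFF$ and $FUFD$). So the conjugation-induced involution on these paths is not path reversal, your two ``descriptions'' of the involution are different maps with different fixed-point sets, and each of the three claims carrying your count---that $\tilde P$ is symmetric for a fixed point, that the induced action on the $2k+1$ gaps has exactly one fixed gap and $k$ transpositions, and that it sends $UU$-gaps to $UU$-gaps---is precisely what you defer to a future beta-set analysis. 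That deferred analysis is the whole proof; as written the argument is a consistency check of the formula rather than a derivation.

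For comparison, the paper's proof never touches the conjugation involution. It uses Theorem~\ref{prop:YYZ} only as an equinumerosity statement, fixes a symmetric generalized Dyck path $Q$ with $k$ steps from $\{U_p,F_2,\dots,F_{p-1}\}$ of which $\ell$ are $U_p$'s not followed by $F_1^iD_p$, and shows that deleting the $p-2$ forced flats after exactly those $\ell$ up steps of $\phi^{-1}(Q)$ produces a genuinely symmetric Motzkin path of length $s-\ell(p-2)$ with $k$ up steps, $\ell$ of them non-peaks; the key point is that the only asymmetry of $\phi^{-1}(Q)$ lives in those $\ell(p-2)$ flats. Writing the resulting path as $F^{x_0}R_1F^{x_1}\cdots R_kF^{x_k}F^{\epsilon}F^{x_k}\cdots F^{x_1}R_1F^{x_0}$ then gives the two factors at once. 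To complete your route you would have to compute the conjugation involution explicitly on the $(s+1)$-abacus and prove your three structural claims (my small-case data suggest the one-fixed-gap-plus-$k$-pairs structure may indeed hold, but with a nonobvious pairing); the paper's reduction replaces all of that with the much easier symmetrization above, so I would recommend adopting it.
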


\begin{proof}
By Theorem \ref{prop:YYZ}, the number $sc_{(s,s+1,\dots,s+p)}$ of self-conjugate $(s,s+1,\dots,s+p)$-core partitions is equal to the number of symmetric $(s,p)$-generalized Dyck paths. 

Let $\mathcal{S}(s,p;k)$ be the set of symmetric $(s,p)$-generalized Dyck paths of length $s$ having $k$ $U_p=(0,p),F_2,F_3,\dots,F_{p-1}$ steps in total so that 
\[
sc_{(s,s+1,\dots,s+p)}=\sum_{k=0}^{\lfloor s/2 \rfloor} |\mathcal{S}(s,p;k)|\,.
\]
For instance, $|\mathcal{S}(s,p;0)|=1$ since if $Q$ is a symmetric $(s,p)$-generalized Dyck path without $U_p,F_2, \dots, F_{p-1}$ steps, then $Q=F_1^{s}$. We note that if $Q\in\mathcal{S}(s,p;k)$, then $\phi^{-1}(Q)$ has $k$ up steps. However, $\phi^{-1}(Q)$ may not be symmetric even though $Q$ is symmetric. Therefore, to simplify calculation, we symmetrize $\phi^{-1}(Q)$ by deleting some flat steps. 

Now, we consider $k\geq1$ and $0\leq \ell \leq k-1$. Let $\mathcal{S}(s,p;k,\ell)$ be the set of $Q\in \mathcal{S}(s,p;k)$ having $\ell$ $U_p$ steps which is not followed by $F_1^{i}D_p$ for any $i\geq 0$. We note that for $Q\in \mathcal{S}(s,p;k,\ell)$, if we replace each $UF^{p-2}$ with $U$, where these $UF^{p-2}$ steps are corresponding to $U_p$ that is not followed by $F_1^{i}D_p$ for any $i\geq 0$, then the resulting path is a symmetric Motzkin path of length $s-\ell(p-2)$ having $\ell$ up steps which is followed by $F^jU$ for some $j\geq0$. Conversely, for a symmetric Motzkin path of length $s-\ell(p-2)$ having $\ell$ up steps which is followed by $F^jU$ for some $j\geq0$, if we replace each of these $\ell$ $U$'s with $UF^{p-2}$, and 
apply $\phi$ to the resulting path, then we obtain a path belonging to $\mathcal{S}(s,p;k,\ell)$. It leads us to that $|\mathcal{S}(s,p;k,\ell)|$ is equal to the number of symmetric Motzkin paths of length $s-\ell(p-2)$ having $k$ up steps such that there are $\ell$ up steps followed by $F^{j}U$ steps for $j\geq0$. 

We note that a symmetric Motzkin path of length $s-\ell(p-2)$ having $k$ up steps is of the form 
\[
F^{x_0}R_1F^{x_1}R_2\cdots R_kF^{x_k}F^{\epsilon}F^{x_k}R_k \cdots R_2 F^{x_1} R_1 F^{x_0},
\]
where $R=R_1R_2\cdots R_k R_k \cdots R_2 R_1$ is a symmetric Dyck path of order $k$ with $\ell$ $UU$ steps, $\epsilon\in\{0,1\}$, and
\[
x_0+x_1+\cdots+x_k=\left\lfloor \frac{s-\ell(p-2)-2k}{2} \right\rfloor\,.
\]
It follows from Lemma \ref{lem:symdyck}, there are $\binom{\lfloor (k-1)/2\rfloor}{\lfloor \ell/2 \rfloor}\binom{\lfloor k/2\rfloor}{\lfloor (\ell+1)/2 \rfloor}$ symmetric Dyck paths of order $k$ with $\ell$ $UU$ steps. Since the number of nonnegative integer solutions to $x_0+x_1+\cdots+x_{k}=\lfloor \{s-\ell(p-2)-2k\}/2 \rfloor$ 
is $\binom{\lfloor \frac{s-\ell(p-2)}{2}\rfloor}{k}$, we have 
\[
|\mathcal{S}(s,p;k,\ell)|= \binom{\lfloor \frac{k-1}{2}\rfloor}{\lfloor \frac{\ell}{2} \rfloor}\binom{\lfloor \frac{k}{2}\rfloor}{\lfloor \frac{\ell+1}{2} \rfloor}\binom{\lfloor \frac{s-\ell(p-2)}{2}\rfloor}{k}\,,
\]
for $\ell\geq0$ satisfying that $s-\ell(p-2)\geq0$ and $\lfloor \{s-\ell(p-2)\}/2\rfloor\geq k$. Summing over $\ell$ gives the cardinality of $S(s,p;k)$. It completes the proof.

\end{proof}


\section{Concluding remarks}
It would be very interesting if one can find a lattice path interpretation or a closed formula for self-conjugate $(s,s+d,\dots,s+pd)$-core partitions, for relatively prime positive integers $s$ and $d\geq 2$.


\section*{Acknowledgments}
The authors would like to thank Byungchan Kim for his valuable comments. The second named author’s work was supported by the National Research Foundation of Korea (NRF) grant funded by the Korea government (MSIP) (2016R1A5A1008055). The third named author’s work was supported by the National Research Foundation of Korea (NRF) NRF-2017R1A2B4009501.

\bibliographystyle{plain}  
\bibliography{mybib} 

\end{document}